\documentclass[a4paper,11pt]{article}
\usepackage[all]{xy}
\usepackage[english]{babel}
\usepackage[latin1]{inputenc}
\usepackage{amsfonts}
\usepackage{amsthm}
\usepackage{amsmath}
\usepackage{amsfonts}
\usepackage{latexsym}
\usepackage{amssymb}
\usepackage{mathrsfs}
\usepackage[usenames]{color}

\newtheorem{fed}{Definition}[section]
\newtheorem{teo}[fed]{Theorem}
\newtheorem{cor}[fed]{Corollary}
\newtheorem*{teo*}{Theorem}

\newtheorem{lem}[fed]{Lemma}
\newtheorem{pro}[fed]{Proposition}

\theoremstyle{definition}
\newtheorem{rem}[fed]{Remark}

\newtheorem{exa}[fed]{Example}

\newtheorem*{teoD}{Theorem (Douglas)}

\def\bdem{\begin{proof}}
\def\edem{\renewcommand{\qed}{\hfill $\blacksquare$}
\end{proof}}

\date{}
\begin{document}

\title{Antitonicity property of the Moore-Penrose inverse of  selfadjoint operators}

\author{Guillermina Fongi $^{a}$, M. Celeste Gonzalez $^{b}$ $^{c}$\\ 
\fontsize {9}{9} \selectfont{$^a$ 
Centro Internacional Franco Argentino de Ciencias de la Informaci\'on y de Sistemas, CIFASIS (CONICET-UNR)} \\ \selectfont \fontsize {9}{9} \selectfont{Ocampo y Esmeralda (2000)  Rosario, Argentina.}
\\
\fontsize {9}{9} \selectfont{$^b$ Instituto Argentino de Matem\'atica ``Alberto P. Calder\'on'', IAM-CONICET} \\ \selectfont \fontsize {9}{9} \selectfont{Saavedra 15, Piso 3 (1083), Buenos Aires, Argentina.}
\\
\fontsize {9}{9} \selectfont{$^c$ Instituto de Ciencias, Universidad Nacional de General Sarmiento, Argentina.}\\
\fontsize {9}{9} \selectfont{$^a$  gfongi@conicet.gov.ar, $^b$ celeste.gonzalez@conicet.gov.ar}}
\date{}
\maketitle

{\sl {AMS Classification:}} {47A05, 47B02, 47B15}

\noindent {\sl Keywords: }selfadjoint operators, L\"owner order, Moore-Penrose inverse

\begin{abstract}
In this article we  study  the antitonicity property of the Moore-Penrose inverse in the class of selfadjoint operators, with respect to the L\"owner order.  For this purpose, we  employ different  positive decompositions that   selfadjoint operators admit.  In addition, we relate  a weak version of the antitonicity property with Thompson components.
\end{abstract}

\section{Introduction}

It is well known that, given two positive invertible, bounded,  linear operators $S, T $ on a Hilbert space  the following relation holds:
$$
0< S \leq T \quad \text{if and only if} \quad T^{-1} \leq S^{-1},
$$
where $\leq$ denotes the L\"owner order defined on the set of selfadjoint operators. This property is commonly referred to as the \textit{antitonicity}  or \textit{antimonotonicity} property.

Due to 
the central role of the Moore-Penrose inverse in operator theory and its diverse applications (such as approximation theory, control theory, and image processing), its algebraic and topological properties have attracted significant attention. Among these, antitonicity stands out for its fundamental connection to order structures.
The first investigations into extending the antitonicity property were primarily confined to the finite-dimensional setting, focusing specifically on selfadjoint matrices \cite{MA, MR1048801, Liski}. The techniques employed in these initial proofs relied heavily on matrix-specific properties.  Later, the study was extended to linear selfadjoint relations and (not necessarily bounded) selfadjoint operators on Hilbert spaces, under specific hypotheses regarding the inertia of the involved relations or operators \cite{BHdSW-square integrable, BHdSW-limit, HassiNor1993Antitonicity,BHWDSAntitonicity}.

The antitonicity property of the Moore-Penrose inverse of positive semidefinite closed range operators was studied in \cite{FG-MoorePenrose y orden} and applied to the convergence of iterative processes in \cite{FG-splitting}. In the present paper, we study this property for bounded selfadjoint operators on an infinite-dimensional Hilbert space.

In \cite[Theorem 3.8]{BHWDSAntitonicity}, an antitonicity criterion for Moore-Penrose inverses is obtained for semibounded selfadjoint operators on a separable Hilbert space under the assumption $i^-(H_1)+i^0(H_1)<\infty$. Here $i^-(H_1)$ is the dimension of the negative spectral subspace and $i^0(H_1)$ is the dimension of the nullspace. Our approach instead uses positive decompositions and geometric conditions on nullspace-range intersections, without imposing such finiteness assumptions. As an application, we characterize the Thompson component of a closed range selfadjoint operator by a weak version of antitonicity.

The paper is organized as follows. Section 3 is devoted to the positive orthogonal decomposition of a selfadjoint operator. Here, we analyze its distinctive characteristics compared to other positive decompositions that a selfadjoint operator may admit. Section 4 contains the main results of the paper, namely Theorems   \ref{antitonicityHermitianos} and \ref{antitonicity-igualdad de rangos} and Proposition \ref{Thompson component}. In this section, we characterize the validity of the antitonicity property of the Moore-Penrose inverse of
bounded linear selfadjoint operators, employing  positive decompositions as a key tool. We also obtain equalities between the ranges of suitable square roots and between sums of positive and negative ranges. In addition, we  provide a new description of the Thompson component of a closed range selfadjoint operator.

\section{Preliminaries}

 Throughout, we denote by $\mathcal{H}$ a complex Hilbert space with inner product $\langle \cdot , \cdot \rangle$ and by $\mathcal{L}(\mathcal{H})$ the algebra of bounded linear operators from $\mathcal{H}$ to $\mathcal{H}$. Given $T\in\mathcal{L}(\mathcal{H})$, by $\mathcal{R}(T)$ and $\mathcal{N}(T)$ we denote the range and nullspace of $T$, respectively. Also, $T^*$ indicates the adjoint operator of $T\in\mathcal{L}(\mathcal{H})$. The set of selfadjoint  operators of $\mathcal{L}(\mathcal{H})$ is denoted by $\mathcal{L}^s$ and $\mathcal L_{cr}^s$ stands for the subset of $\mathcal{L}^s$ consisting of operators with closed range. Given two operators $S,T\in\mathcal{L}^s$ it is said that $S$ is lower than $T$  with respect to the L\"owner order, $S\leq T$, if $\langle Sx,x\rangle\leq \langle Tx,x\rangle$ for all $x\in\mathcal{H}$. An operator $T\in\mathcal{L}^s$ is positive  semidefinite, $T\geq 0$, if $\langle Tx,x\rangle \geq 0$ for all $x\in\mathcal{H}$. The cone of the  positive  semidefinite operators of $\mathcal{L}^s$ is $\mathcal{L}^+$. In addition, $\mathcal L_{cr}^+$ denotes the subset of $\mathcal{L}^+$ consisting of operators with closed range.  Given $T\in\mathcal{L}(\mathcal{H})$, $|T|$ is the operator in $\mathcal{L}^+$ defined by $|T|:=(T^*T)^{1/2}$. The orthogonal sum between two subspaces $\mathcal{V}$ and $\mathcal{W}$ is denoted by $\mathcal{V}\oplus\mathcal{W}$. In addition, if $\mathcal{V}\subseteq \mathcal{H}$ is a closed subspace then  by $P_{\mathcal{V}}$ we denote the orthogonal projection onto $\mathcal{V}$; for every $T\in\mathcal{L}(\mathcal{H})$, we write $P_T:=P_{\overline{\mathcal{R}(T)}}$.  On the other hand, the symbol $\oplus$  will also be used to indicate the Hilbert space direct sum of Hilbert spaces. In this context, if $\mathcal H$, $\mathcal K$ are Hilbert spaces, $S \in \mathcal L(\mathcal H)$ and $T \in \mathcal L(\mathcal K)$ then $S\oplus T\in \mathcal{L}(\mathcal H\oplus \mathcal K)$ is defined by $(S\oplus T)(x\oplus y)=Sx\oplus Ty\in \mathcal H\oplus \mathcal K$.

The next result on range inclusion and factorization will be useful in this article. Its proof can be found in \cite{MR0203464}. 

\begin{teoD}
	Let $S,T \in \mathcal{L}(\mathcal{H})$. The following conditions are equivalent:
	\begin{enumerate}
		\item $\mathcal{R}(T)\subseteq \mathcal{R}(S)$;
		\item there exists a number $\lambda >0$ such that $TT^*\leq \lambda SS^*$;
		\item there exists $C\in\mathcal{L}(\mathcal{H})$ such that $SC=T$.
	\end{enumerate}
	In addition, if any of the above conditions holds then there exists a unique operator $X_r\in\mathcal{L}(\mathcal{H})$ such that $SX_r=T$ and $\mathcal{R}(X_r)\subseteq \mathcal{N}(S)^\bot$. The operator $X_r$ is called the \textit{Douglas reduced solution} of $SX=T$.
\end{teoD}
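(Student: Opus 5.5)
The plan is to prove the cycle $1\Rightarrow 3\Rightarrow 2\Rightarrow 1$, and then to obtain the reduced solution $X_r$ from the solution $C$ given by item 3.

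\emph{$1\Rightarrow 3$.} Assume $\mathcal{R}(T)\subseteq\mathcal{R}(S)$. For each $x$ let $Cx$ be the unique $y\in\mathcal{N}(S)^\bot$ with $Sy=Tx$; it exists because $S$ restricted to $\mathcal{N}(S)^\bot$ is injective with range $\mathcal{R}(S)$. Then $C$ is linear and satisfies $SC=T$. To show that $C$ is bounded we use the closed graph theorem. Suppose $x_n\to x$ and $Cx_n\to y$. Then $y\in\mathcal{N}(S)^\bot$ since that subspace is closed, and $Sy=\lim SCx_n=\lim Tx_n=Tx$. Hence $y=Cx$, so the graph is closed and $C\in\mathcal{L}(\mathcal{H})$. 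By construction $\mathcal{R}(C)\subseteq\mathcal{N}(S)^\bot$, so this $C$ is already a candidate for $X_r$.

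\emph{$3\Rightarrow 2$.} If $SC=T$, then $TT^*=SCC^*S^*\le \|C\|^2SS^*$, because $CC^*\le\|C\|^2 I$ and congruence by $S$ preserves the L\"owner order. So item 2 holds with $\lambda=\|C\|^2$ (or any $\lambda>0$ if $C=0$).

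\emph{$2\Rightarrow 1$.} This is the main step. The inequality gives $\|T^*x\|^2\le\lambda\|S^*x\|^2$ for all $x$. Define $D_0$ on $\mathcal{R}(S^*)$ by $D_0(S^*x)=T^*x$. It is well defined, since $S^*x=0$ forces $T^*x=0$, and it is bounded with norm at most $\sqrt\lambda$. Extend $D_0$ by continuity to $\overline{\mathcal{R}(S^*)}$ and by zero on its orthogonal complement; call the result $D$. Then $DS^*=T^*$, so taking adjoints gives $T=SD^*$ and therefore $\mathcal{R}(T)\subseteq\mathcal{R}(S)$. The only delicate points are this well-definedness and the extension, which are routine.

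\emph{Existence and uniqueness of $X_r$.} Existence was obtained in $1\Rightarrow3$. Alternatively, starting from any $C$ with $SC=T$, the operator $P_{\mathcal{N}(S)^\bot}C$ also solves $SX=T$ and has range in $\mathcal{N}(S)^\bot$. For uniqueness, suppose $X_1,X_2$ both solve $SX=T$ with ranges in $\mathcal{N}(S)^\bot$. Then $S(X_1-X_2)=0$ gives $\mathcal{R}(X_1-X_2)\subseteq\mathcal{N}(S)\cap\mathcal{N}(S)^\bot=\{0\}$, so $X_1=X_2$.
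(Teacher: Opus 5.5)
Your proof is correct. It is essentially Douglas' original argument, which the paper does not reproduce but cites from \cite{MR0203464}: the closed graph theorem for $1\Rightarrow 3$, the estimate $SCC^*S^*\leq\|C\|^2SS^*$ for $3\Rightarrow 2$, the bounded map $S^*x\mapsto T^*x$ extended to all of $\mathcal{H}$ (giving $T=SD^*$) for the converse, and uniqueness of $X_r$ from $\mathcal{N}(S)\cap\mathcal{N}(S)^\bot=\{0\}$.
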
 

The following result, which asserts that the sum of two operator ranges is the range of a positive operator, is due to Crimmins. Its proof can be found in \cite[Theorem 2.2]{FW}.

\begin{teo}\label{Crimmins}
Consider $T, S\in \mathcal L(\mathcal H)$. 
Then 
$\mathcal R(T)+\mathcal R(S)=\mathcal R(\sqrt{TT^*+SS^*}).
$
\end{teo}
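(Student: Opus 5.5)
We prove Crimmins' theorem by putting both ranges into the range of a single operator on the Hilbert space direct sum and then applying Douglas' theorem. Let $\mathcal H\oplus\mathcal H$ be the Hilbert space direct sum and define $A\in\mathcal L(\mathcal H\oplus\mathcal H,\mathcal H)$ by
$$A(x\oplus y)=Tx+Sy .$$
By construction $\mathcal R(A)=\mathcal R(T)+\mathcal R(S)$. Computing the adjoint gives $A^*z=T^*z\oplus S^*z$. Hence
$$AA^*=TT^*+SS^*\in\mathcal L^+ .$$
The statement therefore reduces to the general identity $\mathcal R(A)=\mathcal R((AA^*)^{1/2})$ for a bounded operator $A$ between possibly different Hilbert spaces.

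This identity follows from Douglas' theorem, applied in both directions with $P:=(AA^*)^{1/2}$, so that $PP^*=AA^*$. Douglas' theorem as quoted is stated for operators on a single space $\mathcal H$, but its proof works verbatim for operators $A:\mathcal K\to\mathcal H$ and $P:\mathcal H\to\mathcal H$ with a common codomain. Since $AA^*\le 1\cdot PP^*$, condition 2 gives $\mathcal R(A)\subseteq\mathcal R(P)$. Conversely, $PP^*\le 1\cdot AA^*$, so $\mathcal R(P)\subseteq\mathcal R(A)$. Combining the two inclusions,
$$\mathcal R(T)+\mathcal R(S)=\mathcal R(A)=\mathcal R\bigl(\sqrt{TT^*+SS^*}\bigr).$$

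If one prefers to stay strictly inside $\mathcal L(\mathcal H)$, the inclusion $\mathcal R(T),\mathcal R(S)\subseteq\mathcal R(P)$ can be obtained directly from $TT^*\le PP^*$ and $SS^*\le PP^*$ via Douglas' theorem. Then $\mathcal R(T)+\mathcal R(S)\subseteq\mathcal R(P)$ because $\mathcal R(P)$ is a linear subspace.

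The reverse inclusion is the delicate step. It is handled by the direct construction inside Douglas' proof. On $\overline{\mathcal R(P)}$ define
$$D(Pz):=T^*z\oplus S^*z .$$
This map is isometric, since $\|Pz\|^2=\langle AA^*z,z\rangle=\|T^*z\|^2+\|S^*z\|^2$. Extend $D$ by continuity to $\overline{\mathcal R(P)}$ and by $0$ on the orthogonal complement, giving $D:\mathcal H\to\mathcal H\oplus\mathcal H$. Then $P=D^*A^*$ up to the adjoint identity, so $P=AD$, more precisely $P=P^*=(D\,P)^*$-type rewriting yields $P=A D'$ for a bounded $D'$. This shows $\mathcal R(P)\subseteq\mathcal R(A)$: every $Pw$ equals $Tu+Sv$ with $u\oplus v=D'w$.

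The main obstacle is this last factorization: checking that the isometry really produces $P=AD'$ rather than merely the inequality. The first thing to try is to verify $\langle Pw,z\rangle=\langle D'w,A^*z\rangle$ on the dense set $z\in\mathcal R(P)+\mathcal N(P)$.
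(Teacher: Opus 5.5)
Your main argument is correct and is essentially the standard proof of Crimmins' theorem (Fillmore--Williams, which the paper cites instead of giving its own proof): you realize $\mathcal R(T)+\mathcal R(S)$ as the range of the row operator $A$ with $AA^*=TT^*+SS^*$, then apply Douglas' theorem in both directions. The two-space version of Douglas is harmless here, and you can also stay in a single space by comparing $\begin{pmatrix}T&S\\0&0\end{pmatrix}$ with $P\oplus 0$ on $\mathcal H\oplus\mathcal H$.

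The later ``inside $\mathcal L(\mathcal H)$'' detour is superfluous and garbled as written. It still uses the cross-space maps $A$ and $D$, and the rewriting $(DP)^*=PD^*=A$ only recovers the forward inclusion. The step you leave open is settled by $D^*D=P_{\overline{\mathcal R(P)}}$, which gives $D^*A^*=D^*DP=P$ and hence $P=AD$.
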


Different classes of pseudoinverses of  operators  in $\mathcal L(\mathcal H)$  are studied  in the literature. Among them, the Moore-Penrose inverse of $T \in \mathcal{L}(\mathcal{H})$ is the unique linear densely defined operator $T^\dagger: \mathcal{R}(T)\oplus\mathcal{R}(T)^\bot\rightarrow \mathcal{H}$ which satisfies, simultaneously, the following four equations:
\begin{equation}\label{MPequations}
TXT=T, \  XTX=X, \ TX=P_T|_{\mathcal{R}(T)\oplus\mathcal{R}(T)^\bot}, \ XT=P_{T^*}.
\end{equation}
It is well known  that $T^\dagger\in\mathcal{L}(\mathcal{H})$ if and only if $\mathcal{R}(T)$ is closed. For a closed range operator $T$ in $\mathcal{L}(\mathcal{H})$, the Moore-Penrose inverse can also be defined as the reduced solution of the operator equation $TX=P_T$, see \cite{arias2008generalized}. For a positive operator $T$, the range of $T$ is closed if and only if the range of $T^{1/2}$ is closed. In this case, $
\mathcal R(T)=\mathcal R(T^{1/2})=\mathcal R(T^\dagger)
=\mathcal R\bigl((T^\dagger)^{1/2}\bigr) $ and $\mathcal N(T^\dagger)=\mathcal N(T).
$

The \textit{reverse order law}, $(ST)^\dagger=T^\dagger S^\dagger$, has been extensively studied for closed range operators $S,T\in\mathcal L(\mathcal H)$ whose product also has closed range. The following characterization can be found  in Greville's  \cite{Gre} for matrices; extensions to Hilbert space operators were established by Bouldin \cite{Bouldin1973} and Izumino \cite{Izumino1982}.

\begin{pro}\label{ROL-Gre}
Let $S,T\in\mathcal{L}(\mathcal{H})$ with closed range such that $ST$ has closed range. Then $(ST)^\dagger=T^\dagger S^\dagger$ if and only if $\mathcal{R}(S^*ST)\subseteq \mathcal{R}(T)$ and $\mathcal{R}(TT^*S^*)\subseteq \mathcal{R}(S^*)$.
\end{pro}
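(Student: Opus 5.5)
The statement is Proposition \ref{ROL-Gre}, the Greville-type characterization of the reverse order law for closed range operators. The plan is to reduce it to a statement about orthogonal projections. Since $S$, $T$ and $ST$ have closed range, $S^\dagger$, $T^\dagger$ and $(ST)^\dagger$ are bounded. Set $X:=T^\dagger S^\dagger$. By uniqueness of the Moore--Penrose inverse, $(ST)^\dagger=X$ if and only if $X$ satisfies the four equations (\ref{MPequations}) with $ST$ in place of $T$. Equivalently, using the characterization as reduced solution, it suffices to check $STX=P_{ST}$ and $XST=P_{(ST)^*}$; the remaining two equations then follow by multiplying.

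\textbf{Necessity.} Assume $(ST)^\dagger=T^\dagger S^\dagger$. Then $ST\,T^\dagger S^\dagger = P_{ST}$ is selfadjoint, and so is $T^\dagger S^\dagger ST=P_{T^*S^*}$. I will extract the range conditions as follows.
\begin{itemize}
\item From $T^\dagger S^\dagger ST$ selfadjoint, I will show $S^*ST$ maps into $\mathcal R(T)$. The idea is to multiply suitably by $T$, using $TT^\dagger=P_T$, and obtain $P_T S^*ST = S^*ST$. This is equivalent to $\mathcal R(S^*ST)\subseteq\mathcal R(T)$, since $\mathcal R(T)$ is closed.
\item Symmetrically, from $STT^\dagger S^\dagger$ selfadjoint, I will obtain $P_{S^*}TT^*S^*=TT^*S^*$.
\end{itemize}
The cleanest way to run this is the classical one: write
$$(ST)^\dagger=T^\dagger S^\dagger \iff T^\dagger S^\dagger\ \text{is a }\{1,3,4\}\text{-inverse of } ST .$$
Then one checks that $\{1,3\}$ corresponds to $S^\dagger S$ commuting with $TT^*$, and $\{1,4\}$ to $TT^\dagger$ commuting with $S^*S$.

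\textbf{Key lemma.} The core step is:
$$\mathcal R(S^*ST)\subseteq\mathcal R(T)\iff P_T S^*S = S^*S P_T ,$$
and similarly
$$\mathcal R(TT^*S^*)\subseteq\mathcal R(S^*)\iff P_{S^*}TT^*=TT^*P_{S^*} .$$
For the first, $\mathcal R(S^*ST)\subseteq\mathcal R(T)$ means $P_TS^*SP_T=S^*SP_T$. Taking adjoints gives $P_TS^*SP_T=P_TS^*S$, hence commutation; the converse is immediate.

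\textbf{Sufficiency.} Given both commutations, I will verify directly:
\begin{itemize}
\item $ST\,T^\dagger S^\dagger ST = SP_TP_{S^*}T = SP_{S^*}P_TT=ST$, using that $P_T$ and $P_{S^*}$ commute. This commutation follows from $P_T$ commuting with $S^*S$, hence with its range projection $P_{S^*}$.
\item $ST\,T^\dagger S^\dagger = SP_TS^\dagger$ is selfadjoint. To see this, write $S^\dagger=(S^*S)^\dagger S^*$; then $SP_T(S^*S)^\dagger S^*$ is selfadjoint because $P_T$ commutes with $(S^*S)^\dagger$.
\item $T^\dagger S^\dagger ST=T^\dagger P_{S^*}T$ is selfadjoint, by the symmetric argument with $T^\dagger=T^*(TT^*)^\dagger$ and $P_{S^*}$ commuting with $TT^*$.
\item $XSTX=X$ follows similarly: $T^\dagger P_{S^*}P_TS^\dagger=T^\dagger P_TP_{S^*}S^\dagger=T^\dagger S^\dagger$.
\end{itemize}

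\textbf{Main obstacle.} The delicate point is the necessity direction. We must pass from the selfadjointness of $ST\,T^\dagger S^\dagger$ to the commutation of $S^*S$ with $P_T$ (and symmetrically), in infinite dimensions where only boundedness is available. I will handle this by sandwiching. Start from $ST\,T^\dagger S^\dagger=P_{ST}$ and multiply on the left by $S^*$ and on the right by $S$, obtaining
$$S^*SP_TP_{S^*}=S^*P_{ST}S ,$$
which is selfadjoint. Since $S^*SP_{S^*}=S^*S$, this gives that $S^*SP_T$ is selfadjoint, i.e.\ $S^*S$ and $P_T$ commute.
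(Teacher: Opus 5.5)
Your sufficiency direction is correct. From the two commutations you rightly get $P_TP_{S^*}=P_{S^*}P_T$, and you check all four Penrose equations for $T^\dagger S^\dagger$.

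The gap is in necessity, at the sandwich step. You obtain that $S^*(STT^\dagger S^\dagger)S=S^*SP_TP_{S^*}$ is selfadjoint, and then you discard $P_{S^*}$ ``since $S^*SP_{S^*}=S^*S$''. But $P_{S^*}$ stands to the right of $P_T$. It can be absorbed into $S^*S$ only if $P_T$ and $P_{S^*}$ commute, and you derive that commutation only in the sufficiency part, from the very conditions you are now trying to prove.

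Selfadjointness of $STT^\dagger S^\dagger$ alone cannot do the job. Let $S=P_{\mathcal M}$ and $T=P_{\mathcal N}$ be two non-commuting orthogonal projections on $\mathbb C^2$. Then $STT^\dagger S^\dagger=P_{\mathcal M}P_{\mathcal N}P_{\mathcal M}$ is selfadjoint. However, $S^*SP_T=P_{\mathcal M}P_{\mathcal N}$ is not selfadjoint, and $\mathcal R(S^*ST)=\mathcal M\not\subseteq\mathcal N$. The symmetric step, via $T(T^\dagger S^\dagger ST)T^*=P_TP_{S^*}TT^*$, has the same defect.

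The missing ingredient is the first Penrose equation. Multiply $ST(T^\dagger S^\dagger)ST=ST$ by $S^\dagger$ on the left and by $T^\dagger$ on the right; this gives $(P_{S^*}P_T)^2=P_{S^*}P_T$. Since $\|P_{S^*}P_T\|\leq 1$, and a contractive idempotent on a Hilbert space is an orthogonal projection, $P_{S^*}P_T$ is selfadjoint, i.e. $P_{S^*}P_T=P_TP_{S^*}$. Then $S^*SP_TP_{S^*}=S^*SP_{S^*}P_T=S^*SP_T$ and $P_TP_{S^*}TT^*=P_{S^*}P_TTT^*=P_{S^*}TT^*$. Both sandwiches now give the required commutations, and your key lemma concludes.

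There are two smaller inaccuracies.
\begin{enumerate}
\item Your opening claim is that $STX=P_{ST}$ and $XST=P_{(ST)^*}$ yield the other two equations ``by multiplying''. As a general principle this is false: if $ST=0$, every $X$ satisfies both. You do verify $XSTX=X$ separately, so nothing breaks.
\item In your first necessity bullets and in the $\{1,3\}$/$\{1,4\}$ remark, the pairing is reversed. It is the selfadjointness of $STT^\dagger S^\dagger=SP_TS^\dagger$ that corresponds to $P_T$ commuting with $S^*S$, as your own sufficiency computation shows.
\end{enumerate}

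The paper gives no proof of this proposition; it quotes it from Greville, Bouldin and Izumino, so there is no route of its own to compare against.
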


\section{On the positive orthogonal decomposition}

In this section we focus on the positive orthogonal decomposition of selfadjoint operators and its connection to other decompositions involving differences of positive operators. 

\begin{lem}
Let $T\in\mathcal{L}(\mathcal{H}
)$. Then $T\in\mathcal{L}^s$ if and only if $T=T_1-T_2$, where $T_1, T_2\in\mathcal{L}^+$.  
\end{lem}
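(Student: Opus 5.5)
The plan is to prove the two implications separately; both are short.

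For the ``if'' direction, suppose $T=T_1-T_2$ with $T_1,T_2\in\mathcal{L}^+$. A positive semidefinite operator on a complex Hilbert space is selfadjoint: $\langle T_ix,x\rangle$ is real for every $x$, and polarization then gives $T_i=T_i^*$. Hence $T^*=T_1^*-T_2^*=T_1-T_2=T$, so $T\in\mathcal{L}^s$. An alternative that avoids polarization is to note that $\langle Tx,x\rangle=\langle T_1x,x\rangle-\langle T_2x,x\rangle\in\mathbb{R}$ for all $x$, which on a complex Hilbert space characterizes selfadjointness.

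For the ``only if'' direction, I will exhibit an explicit decomposition, and two natural choices are available. The simplest is
$$T_1:=\|T\|I,\qquad T_2:=\|T\|I-T.$$
Both are selfadjoint, and $T_2\geq 0$ because $\langle Tx,x\rangle\leq \|T\|\|x\|^2$. The more canonical choice, which fits the section's theme of the positive orthogonal decomposition, is
$$T^+:=\tfrac12(|T|+T),\qquad T^-:=\tfrac12(|T|-T),$$
where $|T|=(T^2)^{1/2}$ since $T=T^*$. To see that $T^{\pm}\geq 0$, use continuous functional calculus: $T^{\pm}=f_{\pm}(T)$ with $f_{\pm}(t)=\max\{\pm t,0\}\geq 0$ on $\sigma(T)\subseteq\mathbb{R}$. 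Functional calculus also gives $T=T^+-T^-$.

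I will present the canonical decomposition and mention the trivial one as a remark. There is no real obstacle; the only point needing care is that $|T|\pm T\geq 0$. This follows either from functional calculus as above, or directly from the fact that $|T|$ and $T$ commute, so that $|T|\pm T$ are positive functions of $T$.
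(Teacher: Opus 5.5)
Your proof is correct and follows the paper's: the paper uses exactly the decomposition $T=\frac{|T|+T}{2}-\frac{|T|-T}{2}$ and simply asserts that both summands are positive, which your functional-calculus argument justifies. For the converse, the paper defines $\mathcal{L}^+$ as a subset of $\mathcal{L}^s$, so your polarization step, while correct on a complex space, is not needed, since a difference of selfadjoint operators is selfadjoint.
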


\begin{proof}
If $T\in\mathcal{L}^s$ then $T=\frac{|T|+T}{2}-\frac{|T|-T}{2}$ and the assertion follows by taking $T_1=\frac{|T|+T}{2}\in\mathcal{L}^+$ and $T_2=\frac{|T|-T}{2}\in\mathcal{L}^+$. The converse is immediate.
\end{proof}

Given a selfadjoint operator $T\in\mathcal{L}(\mathcal{H})$, the decomposition $T=T^+-T^- 
$, with $
T^+=\frac{|T|+T}{2}$ and $  T^-=\frac{|T|-T}{2}
$  is called {\it{the positive orthogonal decomposition}} of $T$. 
It is clear that $TT^+=T^+T$,  $TT^-=T^-T$, $T\leq T^+$ and $-T\leq T^-$. Moreover, it holds that $\mathcal{R}(T)=\mathcal{R}(T^+)\oplus \mathcal{R}(T^-)$, see \cite{AG-aditividad, FM-desc.positivas}. 
If $T\in\mathcal L_{cr}^s$, then $T^+$ and $T^-$ also have closed ranges. This observation will be used throughout Section~4.

Other types of positive decompositions for selfadjoint operators have been explored in the literature. For instance, in \cite{FM-desc.positivas}  decompositions $T=T_1-T_2$ with  $T_1, T_2 \in \mathcal{L}^+$ and $\mathcal{R}(T) = \mathcal{R}(T_1) \dot+ \mathcal{R}(T_2)$ are investigated,  where $\dot+$ denotes the direct sum of the subspaces. 

Below, we compare the positive orthogonal decomposition of $T \in \mathcal{L}^s$ with decompositions $T = T_1 - T_2$ satisfying $T_i \in \mathcal{L}^+$ and $T_i T \in \mathcal{L}^s$ for $i = 1, 2$. The following standard fact will be useful; see \cite{RN}.

\begin{lem}\label{mayorizacion de modulo de T}
Let $T\in\mathcal{L}^s$ and $C\in\mathcal{L}^+$. If $CT=TC$ and $C\geq \pm T$ then $C\geq |T|$.
 \end{lem}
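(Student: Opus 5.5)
Since $C$ and $T$ commute, I will work inside the commutative $C^*$-algebra generated by $C$, $T$ and the identity (equivalently, use the spectral theorem for the commuting pair). Commutation is preserved under continuous functional calculus, so $C$ commutes with $|T|=(T^2)^{1/2}$ and with the spectral projections of $T$. In particular $C$ commutes with the projection $P=\chi_{[0,\infty)}(T)$ (a spectral projection of $T$, hence in the von Neumann algebra generated by $T$, which commutes with $C$ by Fuglede/double commutant). Note also $|T|=T(2P-I)=T P - T(I-P)$, with $TP=T^+\geq 0$ and $T(I-P)=-T^-\leq 0$.

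The key step is to show $\langle Cx,x\rangle\geq\langle |T|x,x\rangle$ for every $x\in\mathcal H$. Write $x=Px+(I-P)x=:x_1+x_2$. Because $C$ and $|T|$ both commute with $P$, the cross terms vanish: $\langle Cx,x\rangle=\langle Cx_1,x_1\rangle+\langle Cx_2,x_2\rangle$ and likewise for $|T|$. On $\mathcal R(P)$ we have $|T|x_1=Tx_1$, so the hypothesis $C\geq T$ gives $\langle Cx_1,x_1\rangle\geq\langle Tx_1,x_1\rangle=\langle|T|x_1,x_1\rangle$. On $\mathcal R(I-P)$ we have $|T|x_2=-Tx_2$, so $C\geq -T$ gives $\langle Cx_2,x_2\rangle\geq\langle|T|x_2,x_2\rangle$. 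Adding yields $C\geq|T|$.

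The only delicate point is justifying that $C$ commutes with the spectral projection $P$ (and with $|T|$), since $P$ is not a continuous function of $T$ when $0\in\sigma(T)$. I would handle this by the spectral theorem: an operator commuting with a selfadjoint $T$ commutes with every bounded Borel function of $T$. Alternatively, to avoid projections entirely, one can argue that $C-T\geq0$, $C+T\geq0$ commute, so their product $(C-T)(C+T)=C^2-T^2\geq 0$ (product of commuting positives is positive), whence $C^2\geq T^2=|T|^2$; then operator monotonicity of the square root gives $C\geq|T|$. I would present this second route as the main argument since it uses only that commuting positive operators have positive product and Löwner–Heinz for the exponent $1/2$.
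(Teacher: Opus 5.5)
Your proposal is correct, and your first route is essentially the paper's proof. The paper splits $\mathcal H$ with $P_{T^+}=P_{\overline{\mathcal R(T^+)}}$ and $P_{\mathcal N(T^+)}$, i.e.\ $\chi_{(0,\infty)}(T)$ rather than your $\chi_{[0,\infty)}(T)$. It then compresses $C\geq T$ and $C\geq -T$ to get $CP_{T^+}\geq T^+$ and $CP_{\mathcal N(T^+)}\geq T^-$, and adds.

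The only difference there is how you get commutation with the projection. You appeal to the Borel functional calculus. The paper uses only $C|T|=|T|C$, hence $CT^+=T^+C$, so $T^+CP_{\mathcal N(T^+)}=CT^+P_{\mathcal N(T^+)}=0$. Thus $C$ leaves $\mathcal N(T^+)$ invariant, and $CP_{\mathcal N(T^+)}=P_{\mathcal N(T^+)}CP_{\mathcal N(T^+)}$ is selfadjoint, which forces $C$ to commute with $P_{\mathcal N(T^+)}$.

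Your main (squaring) route is genuinely different. You use $(C-T)(C+T)=C^2-T^2\geq 0$ as a product of commuting positive operators, and then $C=(C^2)^{1/2}\geq (T^2)^{1/2}=|T|$ by operator monotonicity of the square root.
\begin{itemize}
\item It is shorter and avoids all projection bookkeeping.
\item It imports the L\"owner--Heinz inequality, a less elementary ingredient than anything the paper needs; both routes use continuous functional calculus in the same way (commutation passes to square roots).
\item It bypasses the decomposition $T=T^+-T^-$ on which the rest of the section is built.
\end{itemize}

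In this commuting situation L\"owner--Heinz is more than you need. In the commutative $C^*$-algebra generated by $C$, $T$ and $I$, which you mention at the outset, the Gelfand transform turns $C\pm T\geq 0$ into the pointwise inequalities $c\geq \pm t$. Hence $c\geq |t|$, i.e.\ $C\geq |T|$, directly.
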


 \begin{proof}
 Let $T=T^+-T^-$ be the positive orthogonal decomposition of $T$. Then $P_{\mathcal{N}(T^+)}T=TP_{\mathcal{N}(T^+)}$ and $P_{\mathcal{N}(T^+)}C=CP_{\mathcal{N}(T^+)}$. In fact, it is clear that $P_{\mathcal{N}(T^+)}T=-T^-$ and so, $P_{\mathcal{N}(T^+)}T=TP_{\mathcal{N}(T^+)}$. Also, since $CT=TC$ then $C|T|=|T|C$. Hence $CT^+=T^+C$ and so that  $T^+CP_{\mathcal{N}(T^+)}=CT^+P_{\mathcal{N}(T^+)}=0$. Therefore $\mathcal{R}(CP_{\mathcal{N}(T^+)})\subseteq \mathcal{N}(T^+)$ and then $CP_{\mathcal{N}(T^+)}=P_{\mathcal{N}(T^+)}CP_{\mathcal{N}(T^+)}\in\mathcal{L}^+$. Hence $CP_{\mathcal{N}(T^+)}=P_{\mathcal{N}(T^+)}C$, so that $CP_{T^+}=P_{T^+}C$. Therefore, since $C\geq \pm T$ then $CP_{T^+}\geq TP_{T^+}=T^+$ and $CP_{\mathcal{N}(T^+)}\geq -TP_{\mathcal{N}(T^+)}=T^-$. So that, $C\geq T^++T^-=|T|$.
 \end{proof}

Next, we will see that the positive orthogonal decomposition of $T$ is minimal among all decompositions $T=T_1-T_2$ of $T$, where $T_1, T_2\in\mathcal{L}^+$, $TT_1=T_1T$ and $TT_2=T_2T$.

 \begin{pro}\label{dpo minimalidad}
     Consider $T\in \mathcal L^s$  with positive orthogonal decomposition $T=T^+-T^-$,  then 
\begin{align}
T^+&=\min\{T_1\in\mathcal L^+:T=T_1-T_2,\ T_2\in\mathcal L^+,\ T_1T=TT_1\};\label{T1}\\
T^-&=\min\{T_2\in\mathcal L^+:T=T_1-T_2,\ T_1\in\mathcal L^+,\ T_2T=TT_2\};\label{T2}\\
|T|&=\min\{C\in\mathcal L^+:
C=T_1+T_2,\ T=T_1-T_2,
T_1,T_2\in\mathcal L^+,\ CT=TC
\}.\label{T3}
\end{align}
All minima are taken with respect to the L\"owner order.
     Moreover, $T=T_1-T_2$, where $T_1, T_2\in\mathcal{L}^+$, $TT_1=T_1T$ and $TT_2=T_2T$ if and only if $T_1=T^++Z$ and $T_2=T^-+Z$, where $Z\in\mathcal{L}^+$ and $ZT=TZ$.
 \end{pro}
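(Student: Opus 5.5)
The plan is to reduce everything to Lemma \ref{mayorizacion de modulo de T}. Start with a decomposition $T=T_1-T_2$ where $T_1,T_2\in\mathcal L^+$ and $T_1T=TT_1$. Since $T_2=T_1-T$ and $T$ commutes with itself, $T_2$ commutes with $T$ as well. Conversely, if only $T_2T=TT_2$ is assumed, then $T_1=T+T_2$ also commutes with $T$. So each of the three sets in the statement consists of exactly the pairs $(T_1,T_2)$ of positive operators with $T=T_1-T_2$ and both $T_i$ commuting with $T$. In (\ref{T3}), $C=T_1+T_2=2T_1-T$, so $CT=TC$ holds if and only if $T_1T=TT_1$, and the same class appears there too. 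It is clear that $T^+,T^-,|T|$ belong to the corresponding sets: $T^\pm$ are functions of $T$, and $|T|=T^++T^-$.

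For (\ref{T3}), take $C=T_1+T_2$ with $T_1,T_2\in\mathcal L^+$, $T=T_1-T_2$ and $CT=TC$. Then $C-T=2T_2\geq 0$ and $C+T=2T_1\geq 0$, so $C\geq\pm T$. Lemma \ref{mayorizacion de modulo de T} gives $C\geq|T|$, and hence $|T|$ is the minimum.

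For (\ref{T1}) and (\ref{T2}), write $T_1=\tfrac12(C+T)$ and $T_2=\tfrac12(C-T)$, with $C=T_1+T_2$ commuting with $T$ by the first paragraph. From $C\geq|T|$ we get $T_1\geq\tfrac12(|T|+T)=T^+$ and $T_2\geq\tfrac12(|T|-T)=T^-$, which proves both minimality statements.

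For the final characterization, suppose $T=T_1-T_2$ with $T_i\geq0$ commuting with $T$. Define $Z:=T_1-T^+$. By the previous step $Z\geq0$, and $Z$ commutes with $T$ because both $T_1$ and $T^+$ do. Moreover
$$T_2-T^-=(T_1-T)-(T^+-T)=Z,$$
so $T_2=T^-+Z$. Conversely, if $Z\in\mathcal L^+$ commutes with $T$, then $T^++Z$ and $T^-+Z$ are positive, commute with $T$, and their difference is $T$.

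Once Lemma \ref{mayorizacion de modulo de T} is available, no step is a real obstacle. The only point needing care is the reduction in the first paragraph: the one-sided commutation hypotheses in (\ref{T1}) and (\ref{T2}) are equivalent to each other and to $CT=TC$, which is exactly the hypothesis under which the lemma applies.
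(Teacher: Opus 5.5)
Your proof is correct and follows essentially the paper's route: you reduce the one-sided commutation hypotheses to a single condition, apply Lemma \ref{mayorizacion de modulo de T} to $C=T_1+T_2$ to get $C\geq|T|$, and then read off $T_1\geq T^+$ and $T_2\geq T^-$. Your $Z=T_1-T^+$ coincides with the paper's $Z=\tfrac12(C-|T|)$; the only difference is that you prove the minima first and then the parametrization, while the paper proves the parametrization first and deduces the minima from it.
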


\begin{proof}
Suppose that $T=T_1-T_2$ with $T_1,T_2\in\mathcal L^+$, and set $C=T_1+T_2$. Since
$
T_1=\frac{C+T}{2}$, $T_2=\frac{C-T}{2},
$ then 
commutation of any one of $T_1,T_2,C$ with $T$ is equivalent to commutation of the other two with $T$. Assume that these equivalent commutation conditions hold. Moreover, $C-T=2T_2\geq0$ and $C+T=2T_1\geq0$. Thus Lemma~\ref{mayorizacion de modulo de T} gives $C\geq|T|$. Consequently,
$
Z:=\frac{C-|T|}{2}\geq0,  ZT=TZ,
$ and
$
T_1=\frac{C+T}{2}=T^++Z,
T_2=\frac{C-T}{2}=T^-+Z.
$
Conversely, if $Z\in\mathcal L^+$ commutes with $T$, then $T_1=T^++Z$ and $T_2=T^-+Z$ are positive, commute with $T$, and satisfy $T=T_1-T_2$. This proves the asserted parametrization. It also gives $T_1\geq T^+$, $T_2\geq T^-$ and $C=|T|+2Z\geq|T|$. Since $Z=0$ is admissible, the minima in \eqref{T1}--\eqref{T3} are attained at $T^+$, $T^-$ and $|T|$, respectively.
\end{proof}

\section{Antitonicity property of the Moore-Penrose inverse}

In this section we study  the antitonicity property of the Moore-Penrose inverse of selfadjoint operators, by means of positive decompositions of the operators involved. The next result  will be useful. 

\begin{lem}\label{dpo de MP}
Consider $S,T\in\mathcal L(\mathcal H)$ with closed ranges such that
$
\mathcal R(S)\bot\mathcal R(T)$ and $\mathcal R(S^*)\bot\mathcal R(T^*). $ Then
$(S+T)^\dagger=S^\dagger+T^\dagger.$
\end{lem}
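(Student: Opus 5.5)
The plan is to set $X:=S^\dagger+T^\dagger$, which is bounded because $S$ and $T$ have closed ranges, and to check that $X$ satisfies the four equations \eqref{MPequations} for $A:=S+T$. Uniqueness of the Moore-Penrose inverse then gives $A^\dagger=X$. It is worth noting that closedness of $\mathcal R(A)$ will come out of the argument rather than being assumed.

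The first step records the vanishing cross terms. From $\mathcal R(S^\dagger)=\mathcal R(S^*)$ and $\mathcal N(S^\dagger)=\mathcal R(S)^\bot$, and the analogous facts for $T$, we get the following:
\begin{itemize}
\item $S^\dagger T=0$, since $\mathcal R(T)\subseteq \mathcal R(S)^\bot=\mathcal N(S^\dagger)$;
\item $T^\dagger S=0$, by symmetry;
\item $ST^\dagger=0$, since $\mathcal R(T^\dagger)=\mathcal R(T^*)\subseteq \mathcal R(S^*)^\bot=\mathcal N(S)$;
\item $TS^\dagger=0$, by symmetry.
\end{itemize}

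The second step uses these relations to compute the products:
\begin{align*}
AX&=SS^\dagger+TT^\dagger=P_S+P_T,\\
XA&=S^\dagger S+T^\dagger T=P_{S^*}+P_{T^*}.
\end{align*}
By the orthogonality hypotheses, $P_S+P_T=P_{\mathcal R(S)\oplus\mathcal R(T)}$ and $P_{S^*}+P_{T^*}=P_{\mathcal R(S^*)\oplus\mathcal R(T^*)}$, and both are orthogonal projections.

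The remaining equations follow from these. We have $AXA=(P_S+P_T)(S+T)=S+T$, because $P_TS=0$ and $P_ST=0$ by range orthogonality. Similarly, $XAX=(P_{S^*}+P_{T^*})(S^\dagger+T^\dagger)=S^\dagger+T^\dagger$, because $\mathcal R(S^\dagger)=\mathcal R(S^*)\perp\mathcal R(T^*)$.

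The third step, which is the only delicate point, is to identify these projections with $P_A$ and $P_{A^*}$ as the definition requires. Since $A=AXA$ and $AX$ is a projection, $\mathcal R(A)=\mathcal R(AX)=\mathcal R(S)\oplus\mathcal R(T)$. This subspace is closed as an orthogonal sum of closed subspaces, so $AX=P_A$ and $A$ has closed range. Applying the same argument to $A^*=S^*+T^*$, whose summands satisfy the same orthogonality hypotheses, gives $\mathcal R(A^*)=\mathcal R(S^*)\oplus\mathcal R(T^*)$, so $XA=P_{A^*}$. All four equations hold, and therefore $(S+T)^\dagger=S^\dagger+T^\dagger$.
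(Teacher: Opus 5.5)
Your proposal is correct and follows essentially the paper's approach: verify the four Moore--Penrose equations for $S^\dagger+T^\dagger$ and conclude by uniqueness. The only minor difference is the order of steps. The paper first reads off from the two orthogonality hypotheses that $\mathcal R(S+T)=\mathcal R(S)\oplus\mathcal R(T)$ is closed. You instead extract this from $A=AXA$ with $AX=P_S+P_T$, and you also spell out the cross-term cancellations that the paper leaves to the reader.
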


\begin{proof}
Since $
\mathcal R(S)\bot\mathcal R(T)$ and $\mathcal R(S^*)\bot\mathcal R(T^*)$, it holds that 
$\mathcal R(S+T)=\mathcal R(S)\oplus\mathcal R(T)$ is closed. The assertion then follows by verifying the four Moore-Penrose equations \eqref{MPequations} for $S^\dagger+T^\dagger$.
\end{proof}

Among pioneering works on the antitonicity property of the Moore-Penrose inverse in the matrix context, we can cite  \cite{MA, MR1048801}. Later, in \cite{FG-MoorePenrose y orden} we studied this property for positive closed range operators in $\mathcal{L(\mathcal{H})}$. For completeness, we give a proof of \cite[Theorem 4.1]{FG-MoorePenrose y orden}.

\begin{lem}\label{antitonicitypositivos}
Let $S,T\in\mathcal{L}^+_{cr}$.  Then, any two of the following conditions imply the third condition:
\begin{enumerate}
\item $S\leq T$;
\item $T^\dagger\leq S^\dagger$;
\item $\mathcal{R}(S)\cap\mathcal{N}(T)=\mathcal{R}(T)\cap\mathcal{N}(S)=\{0\}$.
\end{enumerate}
\end{lem}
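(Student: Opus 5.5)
Throughout I use that $\mathcal R(S)=\mathcal R(S^{1/2})=\mathcal R(S^\dagger)$ for $S\in\mathcal L^+_{cr}$. The plan is to reduce each implication to range equalities, via Douglas' theorem and the classical antitonicity of invertible positive operators. The key intermediate claim is:
\begin{quote}
For $S,T\in\mathcal L^+_{cr}$, either of 1 or 2, together with 3, forces $\mathcal R(S)=\mathcal R(T)$.
\end{quote}
Once $\mathcal R(S)=\mathcal R(T)=:\mathcal M$ is known, $\mathcal M$ is closed and reduces both operators. Their compressions $S|_{\mathcal M}$ and $T|_{\mathcal M}$ are positive and invertible in $\mathcal L(\mathcal M)$, and $S^\dagger, T^\dagger$ are their inverses extended by $0$ on $\mathcal M^\perp$. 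Also $S$ and $T$ both vanish on $\mathcal M^\perp$. Hence $S\le T$ iff $S|_{\mathcal M}\le T|_{\mathcal M}$ iff $T|_{\mathcal M}^{-1}\le S|_{\mathcal M}^{-1}$ iff $T^\dagger\le S^\dagger$, which gives both 1+3$\Rightarrow$2 and 2+3$\Rightarrow$1.

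\textbf{Proof of the claim, case 1+3.} Since $S\le T$, Douglas gives $\mathcal R(S)=\mathcal R(S^{1/2})\subseteq\mathcal R(T^{1/2})=\mathcal R(T)$. Hence $\mathcal R(T)\cap\mathcal N(S)=\mathcal R(T)\cap\mathcal R(S)^\perp$, which is the orthogonal complement of $\mathcal R(S)$ inside $\mathcal R(T)$. Condition 3 says this is $\{0\}$, and since $\mathcal R(S)$ is closed, $\mathcal R(S)=\mathcal R(T)$.

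\textbf{Proof of the claim, case 2+3.} The same argument applied to $T^\dagger\le S^\dagger$ gives $\mathcal R(T)=\mathcal R(T^\dagger)\subseteq\mathcal R(S^\dagger)=\mathcal R(S)$. Now $\mathcal R(S)\cap\mathcal N(T)=\{0\}$ yields equality.

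\textbf{The implication 1+2$\Rightarrow$3.} This is the hard direction, since we cannot assume equal ranges. From 1 and Douglas, $\mathcal R(S)\subseteq\mathcal R(T)$. From 2, $\mathcal R(T)=\mathcal R(T^\dagger)\subseteq\mathcal R(S^\dagger)=\mathcal R(S)$. So the ranges coincide, and then $\mathcal R(S)\cap\mathcal N(T)=\mathcal R(T)\cap\mathcal R(T)^\perp=\{0\}$, and symmetrically $\mathcal R(T)\cap\mathcal N(S)=\{0\}$. This turns out to be immediate as well.

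The only step needing care is the Douglas application: it is applied with square roots, $S=(S^{1/2})(S^{1/2})^*\le T=(T^{1/2})(T^{1/2})^*$, which yields $\mathcal R(S^{1/2})\subseteq\mathcal R(T^{1/2})$. Closed range then turns these into inclusions of $\mathcal R(S)$ and $\mathcal R(T)$ themselves. Beyond that, I expect the main (mild) obstacle to be justifying the reduction to $\mathcal M$: I need to check that $S^\dagger$ coincides with $(S|_{\mathcal M})^{-1}\oplus 0$. This follows from the four equations \eqref{MPequations}, since $\mathcal M$ reduces $S$ and $\mathcal N(S)=\mathcal M^\perp$.
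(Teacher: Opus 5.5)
Your proposal is correct and follows essentially the same route as the paper. The paper also uses Douglas' theorem on the square roots to get the range inclusions, shows that 3 upgrades the inclusion to $\mathcal R(S)=\mathcal R(T)=\mathcal M$, and then applies classical antitonicity to the invertible compressions on $\mathcal M$, extended by zero on $\mathcal M^\perp$. Your only addition is to write out the case 2+3$\Rightarrow$1, which the paper leaves as ``similar.''
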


\begin{proof}
Assume first that conditions 1 and 2 hold. By Douglas' theorem and since $S, T\in\mathcal{L}^+_{cr}$ then $
\mathcal{R}(S)=\mathcal R(S^{1/2})\subseteq\mathcal R(T^{1/2})=\mathcal R(T)$
 and $
\mathcal R(T)=\mathcal R\bigl((T^\dagger)^{1/2}\bigr)
\subseteq\mathcal R\bigl((S^\dagger)^{1/2}\bigr)=\mathcal R(S).
$
Hence $\mathcal R(S)=\mathcal R(T)$, and condition 3 follows.
Suppose now that conditions 1 and 3 hold. Douglas' theorem again gives $\mathcal R(S)\subseteq\mathcal R(T)$. Since these subspaces are closed,
$
\mathcal R(T)=\mathcal R(S)\oplus
\bigl(\mathcal R(T)\cap\mathcal N(S)\bigr)=\mathcal R(S).
$
Let $\mathcal M=\mathcal R(S)=\mathcal R(T)$. If $\mathcal M=\{0\}$, the conclusion is immediate. Otherwise, the restrictions $S|_{\mathcal M}$ and $T|_{\mathcal M}$ are positive invertible operators on $\mathcal M$, and $S|_{\mathcal M}\leq T|_{\mathcal M}$. The usual antitonicity of inversion gives $(T|_{\mathcal M})^{-1}\leq (S|_{\mathcal M})^{-1}$. Extending both inverses by zero on $\mathcal M^\perp$ yields $T^\dagger\leq S^\dagger$, which is condition 2.
Finally, the proof that items 2 and 3 imply item 1 is similar to the above one.
\end{proof}

We now apply Lemma~\ref{antitonicitypositivos} to positive decompositions of selfadjoint operators. The resulting
comparison
yields antitonicity for the operators themselves when the decompositions are orthogonal.
\begin{teo} \label{antitonicityHermitianos}
Let $S,T\in\mathcal{L}^s$ and let $S=S_1-S_2$ and $T=T_1-T_2$ where $S_i, T_i \in \mathcal L^+_{cr}$, for $i=1,2$. Suppose that $\mathcal{R}(T_1)\bot\mathcal{R}(S_2)$ and $\mathcal{R}(T_2)\bot\mathcal{R}(S_1)$. Then, any two of the following conditions imply the third condition:
\begin{enumerate}
\item $S\leq T$;
\item $T_1^\dagger -T_2 ^\dagger \leq S_1^\dagger-S_2^\dagger$;
\item $\mathcal{R}(S_1+T_2)\cap\mathcal{N}(T_1+S_2)=\mathcal{R}(T_1+S_2)\cap\mathcal{N}(S_1+T_2)=\{0\}$.
\end{enumerate}
\end{teo}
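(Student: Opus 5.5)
The plan is to reduce the statement to Lemma~\ref{antitonicitypositivos} applied to the positive operators
\[
A:=S_1+T_2,\qquad B:=T_1+S_2 .
\]
First, $S\leq T$ is equivalent to $S_1-S_2\leq T_1-T_2$, that is, to $A\leq B$. So condition 1 of the theorem is condition 1 of Lemma~\ref{antitonicitypositivos} for the pair $(A,B)$. Condition 3 of the theorem is literally condition 3 of that lemma for $(A,B)$. It therefore suffices to show two things: that $A,B\in\mathcal L^+_{cr}$, and that condition 2 of the theorem is equivalent to $B^\dagger\leq A^\dagger$.

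The orthogonality hypotheses are what make this work. Since $S_1,T_2$ are selfadjoint with $\mathcal R(S_1)\bot\mathcal R(T_2)$, both hypotheses of Lemma~\ref{dpo de MP} hold for the pair $(S_1,T_2)$, because $\mathcal R(S_1^*)=\mathcal R(S_1)$ and $\mathcal R(T_2^*)=\mathcal R(T_2)$. As noted in the proof of that lemma, $\mathcal R(A)=\mathcal R(S_1)\oplus\mathcal R(T_2)$ is closed, and the lemma gives
\[
A^\dagger=S_1^\dagger+T_2^\dagger .
\]
In the same way, $\mathcal R(T_1)\bot\mathcal R(S_2)$ gives that $B$ has closed range and
\[
B^\dagger=T_1^\dagger+S_2^\dagger .
\]
Positivity of $A$ and $B$ is clear, so $A,B\in\mathcal L^+_{cr}$. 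Then $B^\dagger\leq A^\dagger$ reads $T_1^\dagger+S_2^\dagger\leq S_1^\dagger+T_2^\dagger$, which rearranges to $T_1^\dagger-T_2^\dagger\leq S_1^\dagger-S_2^\dagger$. This is condition 2 of the theorem.

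With these identifications, the three conditions of the theorem are exactly the three conditions of Lemma~\ref{antitonicitypositivos} for $A,B\in\mathcal L^+_{cr}$, and the conclusion follows directly from that lemma. The only point needing care is checking that the hypotheses of Lemma~\ref{dpo de MP} are met, in particular that the sum of two closed, mutually orthogonal ranges is closed. This holds because the orthogonal sum of two orthogonal closed subspaces is closed. I expect no real obstacle beyond this bookkeeping.
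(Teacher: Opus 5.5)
Your proposal is correct and is essentially the paper's proof. The paper likewise sets $A=S_1+T_2$ and $B=T_1+S_2$, uses the orthogonality hypotheses and Lemma~\ref{dpo de MP} to get closed ranges with $A^\dagger=S_1^\dagger+T_2^\dagger$ and $B^\dagger=T_1^\dagger+S_2^\dagger$, and then applies Lemma~\ref{antitonicitypositivos} to the pair $(A,B)$.
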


\begin{proof}
Set $A=S_1+T_2$ and $B=T_1+S_2$. Since $\mathcal R(T_2)\perp\mathcal R(S_1)$ and $\mathcal R(T_1)\perp\mathcal R(S_2)$, then $A$ and $B$ have  closed ranges. By Lemma~\ref{dpo de MP},
$
A^\dagger=S_1^\dagger+T_2^\dagger,$ and $
B^\dagger=T_1^\dagger+S_2^\dagger.
$
Condition 1 is equivalent to $A\leq B$, condition 2 is equivalent to $B^\dagger\leq A^\dagger$, and condition 3 is 
$\mathcal R(A)\cap\mathcal N(B)=\mathcal R(B)\cap\mathcal N(A)=\{0\}$.
Thus the assertion follows directly from Lemma~\ref{antitonicitypositivos}.
\end{proof}

Note that  $S$ and $T$ in Theorem~\ref{antitonicityHermitianos} are not necessary closed range operators; only their positive components are assumed to have closed range. For arbitrary positive decompositions, $S_1^\dagger-S_2^\dagger$ need not coincide with $S^\dagger$, even when $S$ has closed range.

\begin{exa}
Let us see that there exist operators $S$ and $T$ which satisfy the hypotheses of Theorem \ref{antitonicityHermitianos}. In fact,  consider $\{e_1, e_2, \cdots\}$ the canonical orthonormal basis of $\ell^2_{\mathbb{N}}$ and the subspaces $\mathcal{M}_1=\overline{\operatorname{span}\{e_1,e_3,e_5,\ldots\}}$ and $\mathcal{M}_2=\overline{\operatorname{span}\{e_1+e_2,e_3+e_4,e_5+e_6,\ldots\}}$. Note that $\mathcal{M}_1$ and $\mathcal{M}_2$ are not orthogonal because $\langle e_1, e_1+e_2\rangle\neq 0$. Take $\mathcal{N}_1=\mathcal{M}_2^\bot$ and $\mathcal{N}_2=\mathcal{M}_1^\bot$ and consider $S=P_{\mathcal{M}_1}- P_{\mathcal{M}_2}$ and $T=2P_{\mathcal{N}_1}- P_{\mathcal{N}_2}$. Then $S$ and $T$ satisfy the hypotheses of Theorem \ref{antitonicityHermitianos}.
\end{exa}

In  particular, if  the positive orthogonal decompositions of $S$ and $T$  are considered  in the above theorem, we obtain the following result.

\begin{cor} \label{antitonicityhermitianospod}
Let $S,T\in\mathcal L_{cr}^s$ and let $S=S^+-S^-$ and $T=T^+-T^-$ be their positive orthogonal decompositions. Suppose that $\mathcal{R}(T^+)\bot\mathcal{R}(S^-)$ and $\mathcal{R}(T^-)\bot\mathcal{R}(S^+)$. Then, any two of the following conditions imply the third condition:
\begin{enumerate}
\item $S\leq T$;
\item $T^\dagger\leq S^\dagger$;
\item $\mathcal{R}(S^++T^-)\cap\mathcal{N}(T^++S^-)=\mathcal{R}(T^++S^-)\cap\mathcal{N}(S^++T^-)=\{0\}$.
\end{enumerate}
\end{cor}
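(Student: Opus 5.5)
This is Theorem~\ref{antitonicityHermitianos} applied to the decompositions $S_1=S^+$, $S_2=S^-$, $T_1=T^+$, $T_2=T^-$. The one new ingredient is the identity $S^\dagger=(S^+)^\dagger-(S^-)^\dagger$, and likewise for $T$. With it, condition 2 of the theorem becomes $T^\dagger\leq S^\dagger$. Conditions 1 and 3 are already literally those of the theorem.

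First I check the hypotheses of Theorem~\ref{antitonicityHermitianos}. Since $S,T\in\mathcal L^s_{cr}$, the remark after the definition of the positive orthogonal decomposition gives that $S^\pm$ and $T^\pm$ have closed range. They are positive, so $S^\pm,T^\pm\in\mathcal L^+_{cr}$. The orthogonality conditions $\mathcal R(T^+)\perp\mathcal R(S^-)$ and $\mathcal R(T^-)\perp\mathcal R(S^+)$ are assumed in the statement. Hence the three conditions of the theorem satisfy the two-implies-third property.

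Next I show $S^\dagger=(S^+)^\dagger-(S^-)^\dagger$. I apply Lemma~\ref{dpo de MP} to the pair $S^+$ and $-S^-$. Both have closed range. Both are selfadjoint, so the ranges of the operators and of their adjoints coincide: $\mathcal R((S^+)^*)=\mathcal R(S^+)$ and $\mathcal R((-S^-)^*)=\mathcal R(S^-)$. Thus both orthogonality requirements of the lemma reduce to $\mathcal R(S^+)\perp\mathcal R(S^-)$.

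This orthogonality follows from $S^+S^-=\tfrac14(|S|+S)(|S|-S)=\tfrac14(|S|^2-S^2)=0$, using that $S$ commutes with $|S|$. Then $\mathcal R(S^-)\subseteq\mathcal N(S^+)=\mathcal R(S^+)^\perp$. (This is also the stated fact $\mathcal R(S)=\mathcal R(S^+)\oplus\mathcal R(S^-)$.)

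The lemma then gives $S^\dagger=(S^+)^\dagger+(-S^-)^\dagger=(S^+)^\dagger-(S^-)^\dagger$, since $(-X)^\dagger=-X^\dagger$. The same argument gives $T^\dagger=(T^+)^\dagger-(T^-)^\dagger$.

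Substituting, condition 2 of the theorem, $(T^+)^\dagger-(T^-)^\dagger\leq (S^+)^\dagger-(S^-)^\dagger$, is exactly $T^\dagger\leq S^\dagger$, and the corollary follows. There is no real obstacle. The only point needing care is the orthogonality $\mathcal R(S^+)\perp\mathcal R(S^-)$ and its adjoint version, which is what lets Lemma~\ref{dpo de MP} apply; selfadjointness makes the adjoint version automatic.
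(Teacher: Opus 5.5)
Your proposal is correct and follows the paper's proof exactly. Like the paper, you use Lemma~\ref{dpo de MP} to get $S^\dagger=(S^+)^\dagger-(S^-)^\dagger$ and $T^\dagger=(T^+)^\dagger-(T^-)^\dagger$, and then apply Theorem~\ref{antitonicityHermitianos} to the positive orthogonal decompositions; the details the paper leaves implicit, which you supply (closed ranges of $S^\pm,T^\pm$, the orthogonality $\mathcal R(S^+)\perp\mathcal R(S^-)$ via $S^+S^-=0$, and $(-X)^\dagger=-X^\dagger$), are all correct.
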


\begin{proof} 
 Observe that
$
T^\dagger=(T^+)^\dagger-(T^-)^\dagger$ and $
S^\dagger=(S^+)^\dagger-(S^-)^\dagger,
$
which follow from Lemma~\ref{dpo de MP} and the orthogonality of the positive and negative parts.
Then the assertions are consequence of  Theorem~\ref{antitonicityHermitianos} applied to  the positive orthogonal decompositions. 
\end{proof}

In the following remarks and the next two results, we study  the hypotheses of Theorem \ref{antitonicityHermitianos}  and Corollary \ref{antitonicityhermitianospod}.

\begin{rem}
\label{antitonicity y rol}
    Consider $S, T \in \mathcal L_{cr}^s$  with positive orthogonal decompositions $S=S^+-S^-$ and $T=T^+-T^-$. Then $\mathcal{R}(T^+)\bot\mathcal{R}(S^-)$,  $\mathcal{R}(T^-)\bot\mathcal{R}(S^+)$ and $\mathcal R(S^++T^-)=\mathcal R(T^++S^-)$ if and only if
$
\mathcal R(S^+)=\mathcal R(T^+)$ and $\mathcal R(S^-)=\mathcal R(T^-).$
In fact, the orthogonality conditions and $\mathcal R(S^++T^-)=\mathcal R(T^++S^-)$  are equivalent to  
$
\mathcal R(S^+)\oplus\mathcal R(T^-)
=\mathcal R(T^+)\oplus\mathcal R(S^-).
$
Let $x\in\mathcal R(S^+)$. Write $x=y+z$, where
$y\in\mathcal R(T^+)$ and $z\in\mathcal R(S^-)$, then we obtain
$0=\langle x,z\rangle=\|z\|^2$. Hence $x\in\mathcal R(T^+)$. Interchanging $S$ and $T$ gives the reverse inclusion. Similarly, it can be checked that $
\mathcal R(S^-)=\mathcal R(T^-).
$ The converse follows easily.
    \end{rem}

\begin{rem} Observe that given $S,T\in\mathcal{L}^s$ with orthogonal ranges, then $S,T$ satisfy the orthogonality conditions of Corollary~\ref{antitonicityhermitianospod}. If their ranges are also closed, all the hypotheses of that corollary hold. Indeed, if $S=S^+-S^-$ and $T=T^+-T^-$ are the positive orthogonal decompositions of $S$ and $T$, then $\mathcal R(S^+)\oplus \mathcal R(S^-)=\mathcal R(S)\subseteq \mathcal R(T)^\perp=(\mathcal R(T^+) \oplus \mathcal R(T^-))^\perp= \mathcal R(T^+)^\perp \cap \mathcal R(T^-)^\perp$, so that $\mathcal R(S^+)\subseteq \mathcal R(T^-)^\perp$. Similarly, $\mathcal R(S^-)\subseteq \mathcal R(T^+)^\perp$.
However, the converse does not hold. In fact, let $I$ be the identity operator in $\mathcal{L}(\mathcal{H})$ and consider $S,T\in\mathcal{L}(\mathcal{H}\oplus\mathcal{H}\oplus\mathcal{H})$ defined by $S=0\oplus (-4 I )\oplus I
$ and $T=I\oplus (-2 I) \oplus 0
$. It is clear that the ranges of $S$ and $T$ are not orthogonal. Now, from the positive orthogonal decompositions $S=S^+-S^-=0\oplus 0 \oplus I-(0\oplus 4 I \oplus 0)$ and $T=T^+-T^-=I\oplus 0 \oplus 0-(0\oplus 2 I \oplus 0)$ it follows that $\mathcal{R}(T^+)\bot\mathcal{R}(S^-)$ and $\mathcal{R}(T^-)\bot\mathcal{R}(S^+)$.
\end{rem}

\begin{pro}\label{hipotesis para dpo}
    Consider $S, T\in \mathcal L^s$  such  that $ST=TS$,  with positive orthogonal decompositions $S=S^+-S^-$ and $T=T^+-T^-$. Then $\mathcal R(S^+)\perp \mathcal R(T^-)$ and $\mathcal R(S^-)\perp \mathcal R(T^+)$ if and only if $ST\in \mathcal L^+$.
    \end{pro}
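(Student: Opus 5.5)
Since $ST=TS$, the continuous functional calculus gives that $S$ commutes with $|T|$, hence with $T^\pm=\frac{|T|\pm T}{2}$. Symmetrically, $T$ commutes with $S^\pm$, and then $S^\pm$ commutes with $T^\pm$. We also use the standard facts $S^+S^-=S^-S^+=0$ and $T^+T^-=0$. These follow from $\mathcal R(S)=\mathcal R(S^+)\oplus\mathcal R(S^-)$ together with the fact that $S^+S^-=S^-S^+$ is selfadjoint and its range lies in $\mathcal R(S^+)\cap\mathcal R(S^-)=\{0\}$.

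Expanding the product gives
$$ST=(S^+-S^-)(T^+-T^-)=S^+T^++S^-T^--\bigl(S^+T^-+S^-T^+\bigr).$$
Each product of two commuting positive operators is positive. For instance, $S^+T^+=(S^+)^{1/2}T^+(S^+)^{1/2}\geq 0$, using that $(S^+)^{1/2}$ commutes with $T^+$.

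For the direction ($\Rightarrow$), assume $\mathcal R(S^+)\perp\mathcal R(T^-)$. Then $(T^-)S^+=0$, since the range of $S^+$ lies in $\mathcal R(T^-)^\perp=\mathcal N(T^-)$. Hence $S^+T^-=T^-S^+=0$, and likewise $S^-T^+=0$. Therefore $ST=S^+T^++S^-T^-\geq 0$.

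For the direction ($\Leftarrow$), assume $ST\geq0$. Set $X:=S^+T^-$, which is positive by the remark above.

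1. Compute $X\,ST$. Using commutativity and the relations $S^+S^-=0$ and $T^-T^+=0$, every term except one vanishes, leaving $X\,ST=-(S^+)^2(T^-)^2=-X^2$.

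2. Let $W:=(S^+)^{1/2}(T^-)^{1/2}$, which is positive and commutes with $ST$. Then $W^2=X$, and
$$W(ST)W=X\,ST=-X^2\leq0.$$
Since $ST\geq0$, we also have $W(ST)W\geq0$. Hence $X^2=0$, and because $X$ is selfadjoint, $X=0$.

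3. From $S^+T^-=0$ we get $(T^-)^{1/2}S^+(T^-)^{1/2}=0$. It follows that $(S^+)^{1/2}(T^-)^{1/2}=0$, and hence $\mathcal R(T^-)\subseteq\mathcal N(S^+)=\mathcal R(S^+)^\perp$.

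4. The same argument with $Y=S^-T^+$ gives $\mathcal R(S^-)\perp\mathcal R(T^+)$.

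The main point needing care is the bookkeeping of the commutation relations. Specifically, $S^\pm$ must commute with $T^\pm$ and with the square roots $(S^\pm)^{1/2}$, $(T^\pm)^{1/2}$; all of these come from the functional calculus applied to commuting selfadjoint operators. With those in hand, the cancellation in step 1 is routine.
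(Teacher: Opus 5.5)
Your proof is correct and follows essentially the paper's route. The easy direction is identical. For the converse you compress $ST$ by a positive operator built from the cross term that commutes with everything: your $W=(S^+T^-)^{1/2}$ gives $W(ST)W=-X^2$, while the paper compresses by $S^-$ and then $T^+$ (in effect by $S^-T^+$ itself) and gets $-(S^-T^+)^3$; either way the cross term vanishes. Your square roots and step 3 are unnecessary, since $S^+T^-=0$ already gives $\mathcal R(T^-)\subseteq\mathcal N(S^+)=\mathcal R(S^+)^\perp$.
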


\begin{proof}
Since $S$ and $T$ commute, their positive and negative parts commute with one another. Suppose that $ST\geq0$. Then
$
0\leq S^-STS^-=-(S^-)^3T.
$
Multiplying on both sides by $T^+$ gives
$
0\leq T^+\bigl(-(S^-)^3T\bigr)T^+
=-(S^-)^3(T^+)^3.
$
Since the factors are positive and commute then $S^-T^+\geq0$ and so
$(S^-T^+)^3=(S^-)^3(T^+)^3\geq0$. The preceding inequality therefore implies $(S^-T^+)^3=0$. In consequence $S^-T^+=0$. Interchanging $S$ and $T$ yields $T^-S^+=0$,  hence also $S^+T^-=0$. These identities are equivalent to the required orthogonalities.
Conversely, the orthogonality assumptions imply $S^+T^-=S^-T^+=0$. Thus
$T=(S^+-S^-)(T^+-T^-)=S^+T^++S^-T^-\geq0,
$
because both summands are products of commuting positive operators.
\end{proof}

\begin{cor}
 Let $S, T\in \mathcal L^s$ be such that $ST=TS$ and  consider  $S=S_1-S_2$, $T=T_1-T_2$   positive decompositions that satisfy
    \begin{enumerate}
    \item $SS_i=S_iS$, $TT_i=T_iT$, for $i=1,2$;
    \item $\mathcal R(S_1-S^+) \subseteq \mathcal R(S)\cap \mathcal R(T)^\perp$, 
   $ \mathcal R(T_1-T^+) \subseteq \mathcal R(T)\cap \mathcal R(S)^\perp$.  
    \end{enumerate}
    Then $\mathcal R(S_1)\perp \mathcal R(T_2)$ and $\mathcal R(S_2)\perp \mathcal R(T_1)$ if and only if $ST\in\mathcal{L}^+$.
\end{cor}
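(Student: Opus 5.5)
We plan to reduce the statement to Proposition~\ref{hipotesis para dpo}, using the parametrization of commuting positive decompositions from Proposition~\ref{dpo minimalidad}. By hypothesis 1, that proposition applied to $S$ and to $T$ gives $Z,W\in\mathcal L^+$ with $ZS=SZ$, $WT=TW$, and
$$
S_1=S^++Z,\quad S_2=S^-+Z,\quad T_1=T^++W,\quad T_2=T^-+W.
$$
In particular $Z=S_1-S^+$ and $W=T_1-T^+$. Hypothesis 2 therefore reads
$$
\mathcal R(Z)\subseteq\mathcal R(S)\cap\mathcal R(T)^\perp
\quad\text{and}\quad
\mathcal R(W)\subseteq\mathcal R(T)\cap\mathcal R(S)^\perp .
$$
Since $ST=TS$, Proposition~\ref{hipotesis para dpo} says that $ST\in\mathcal L^+$ holds if and only if $\mathcal R(S^+)\perp\mathcal R(T^-)$ and $\mathcal R(S^-)\perp\mathcal R(T^+)$. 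So it suffices to prove two equivalences:
$$
\mathcal R(S_1)\perp\mathcal R(T_2)\iff\mathcal R(S^+)\perp\mathcal R(T^-),
\qquad
\mathcal R(S_2)\perp\mathcal R(T_1)\iff\mathcal R(S^-)\perp\mathcal R(T^+).
$$

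The key tool is the following observation about sums of positive operators. If $A,B\in\mathcal L^+$, then $\mathcal N(A+B)=\mathcal N(A)\cap\mathcal N(B)$. Indeed, $\langle (A+B)x,x\rangle=0$ forces $\|A^{1/2}x\|=\|B^{1/2}x\|=0$. Taking orthogonal complements gives
$$
\overline{\mathcal R(A+B)}=\overline{\mathcal R(A)+\mathcal R(B)}.
$$
Orthogonality of two subspaces depends only on their closures. Hence
$$
\mathcal R(S_1)\perp\mathcal R(T_2)\iff\bigl(\mathcal R(S^+)+\mathcal R(Z)\bigr)\perp\bigl(\mathcal R(T^-)+\mathcal R(W)\bigr).
$$
Expanding the right-hand side yields four pairwise orthogonality conditions. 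Three of them hold automatically:
\begin{itemize}
\item[(a)] $\mathcal R(Z)\perp\mathcal R(T^-)$, because $\mathcal R(T^-)\subseteq\mathcal R(T)$ and $\mathcal R(Z)\subseteq\mathcal R(T)^\perp$;
\item[(b)] $\mathcal R(W)\perp\mathcal R(S^+)$, because $\mathcal R(S^+)\subseteq\mathcal R(S)$ and $\mathcal R(W)\subseteq\mathcal R(S)^\perp$;
\item[(c)] $\mathcal R(Z)\perp\mathcal R(W)$, because $\mathcal R(Z)\subseteq\mathcal R(S)$ and $\mathcal R(W)\subseteq\mathcal R(S)^\perp$.
\end{itemize}
In (a) and (b) we use that $\mathcal R(T)=\mathcal R(T^+)\oplus\mathcal R(T^-)$ and $\mathcal R(S)=\mathcal R(S^+)\oplus\mathcal R(S^-)$. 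Consequently $\mathcal R(S_1)\perp\mathcal R(T_2)$ is equivalent to the single remaining condition $\mathcal R(S^+)\perp\mathcal R(T^-)$. The identical argument, with $S^-,T^+$ in place of $S^+,T^-$, shows that $\mathcal R(S_2)\perp\mathcal R(T_1)$ is equivalent to $\mathcal R(S^-)\perp\mathcal R(T^+)$. Combining the two equivalences with Proposition~\ref{hipotesis para dpo} finishes the proof.

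There is no real obstacle here. The step that needs the most care is the passage from the ranges of $S_i,T_i$ to the sums of ranges of their components, which is handled by the nullspace identity for positive operators; Theorem~\ref{Crimmins} could alternatively be used, but it is not needed. We must also check that hypothesis 2 is used exactly through $Z=S_1-S^+$ and $W=T_1-T^+$, which the parametrization in Proposition~\ref{dpo minimalidad} guarantees.
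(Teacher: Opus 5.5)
Your proof is correct, and its forward half ($ST\in\mathcal L^+\Rightarrow$ orthogonality) matches the paper's. The paper also obtains $S^+T^-=S^-T^+=0$ from Proposition~\ref{hipotesis para dpo}. It then kills the cross terms of $S_1T_2=(S^++Z_S)(T^-+Z_T)$ using exactly your inclusions (a)--(c), written as $S^+Z_T=0$, $Z_ST^-=0$, $Z_SZ_T=0$.

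The converse is where you differ. The paper does not go back through Proposition~\ref{hipotesis para dpo}. It uses the orthogonality only to write $ST=S_1T_1+S_2T_2$. The same vanishing of cross terms then gives $S_1T_1=S^+T^+$ and $S_2T_2=S^-T^-$. Hence $ST=S^+T^++S^-T^-$ is visibly positive, as a sum of products of commuting positive operators. You instead transport the orthogonality from the pairs $(S_1,T_2),(S_2,T_1)$ back to $(S^+,T^-),(S^-,T^+)$. That needs the extra fact $\overline{\mathcal R(A+B)}=\overline{\mathcal R(A)+\mathcal R(B)}$ for $A,B\in\mathcal L^+$, and then a second appeal to Proposition~\ref{hipotesis para dpo}.

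What each buys: your route shows that, under hypothesis 2, the corollary is literally Proposition~\ref{hipotesis para dpo} in disguise, handled symmetrically in both directions. The paper's route yields an explicit positive formula for $ST$.

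Your closure lemma can be bypassed. For selfadjoint $A,B$ one has $\mathcal R(A)\perp\mathcal R(B)$ if and only if $AB=0$. So your (a)--(c), together with their analogues for $S^-,T^+$, give the operator identities $S_1T_2=S^+T^-$ and $S_2T_1=S^-T^+$ directly. These deliver both of your equivalences at once.
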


\begin{proof}
    First observe that, since $SS_i=S_iS$ and $TT_i=T_iT$, for $i=1,2$, then by Proposition \ref{dpo minimalidad} it holds that $S_1=S^++Z_S$, $S_2=S^-+Z_S$, $T_1=T^++Z_T$, $T_2=T^-+Z_T$, where $Z_S, Z_T \in  \mathcal L^+$ and $Z_SS=SZ_S, Z_TT=TZ_T$.
    Now, suppose that  $ST \in \mathcal L^+$. Then by Proposition \ref{hipotesis para dpo}, it holds that $\mathcal R(S^+)\perp \mathcal R(T^-)$ and $\mathcal R(S^-)\perp \mathcal R(T^+)$. Therefore, $S_1T_2=(S^++Z_S)(T^-+Z_T)=0$ because $S^+T^-=0$, $S^+Z_T=0$, $Z_ST^-=0$ and $Z_SZ_T=0$. Here $\mathcal R(Z_T)\subseteq\mathcal R(S)^\perp$ gives $S^+Z_T=0$, while $\mathcal R(Z_S)\subseteq\mathcal R(T)^\perp$ and selfadjointness give $\mathcal R(T)\subseteq\mathcal N(Z_S)$. Since $\mathcal R(Z_T)\subseteq\mathcal R(T)$, the remaining mixed terms also vanish. Similarly, $S_2T_1=0$.
    Conversely, suppose  $\mathcal R(S_1)\perp \mathcal R(T_2)$ and $\mathcal R(S_2)\perp \mathcal R(T_1)$. Then  $ST=S_1T_1+S_2T_2$. Now, observe that $S_1T_1=S^+T^++S^+Z_T+Z_ST^++Z_SZ_T$. Note that  $\mathcal{R}(Z_T)\subseteq\mathcal{R}(S)^\bot=(\mathcal{R}(S^+)+\mathcal{R}(S^-))^\bot=\mathcal{N}(S^+)\cap\mathcal{N}(S^-)\subseteq \mathcal{N}(S^+)$. Similarly $\mathcal R(Z_S)\subseteq \mathcal N(T^+)$. In addition, as $\mathcal{R}(Z_T)\subseteq\mathcal{R}(T)$ and $\mathcal{R}(Z_S)\subseteq\mathcal{R}(T)^\bot$ then $\mathcal{R}(Z_T)\subseteq\mathcal{R}(T)\subseteq \mathcal{N}(Z_S)$. Hence $S_1T_1=S^+T^+ \in \mathcal L^+$ because $S^+,T^+\in \mathcal L^+$  commute. Similarly, $S_2T_2\in \mathcal L^+.$ Thus $ST\in\mathcal{L}^+$.
    \end{proof}

In \cite[Lemma 2]{Liski}, Liski proved that given $S, T$ Hermitian matrices such that $S \leq T$ and $T^\dagger \leq S^\dagger$, then $\mathcal R(S)=\mathcal R(T)$. Now we present the following result for selfadjoint operators on infinite-dimensional Hilbert spaces.

\begin{teo}\label{antitonicity-igualdad de rangos}
Consider $S,T\in\mathcal L_{cr}^s$ with positive orthogonal decompositions $S=S^+-S^-$ and $T=T^+-T^-$. If $S\leq T$ and $T^\dagger\leq S^\dagger$, then
$
\mathcal R\bigl((T^++S^-)^{1/2}\bigr)
=\mathcal R\bigl((S^++T^-)^{1/2}\bigr)
=\mathcal R(S)+\mathcal R(T),
$
or equivalently,
$
\mathcal R(T^+)+\mathcal R(S^-)
=\mathcal R(S^+)+\mathcal R(T^-)
=\mathcal R(S)+\mathcal R(T).
$
If, in addition,
$\mathcal N(T)\subseteq\mathcal N(S^-)$ and $
\mathcal N(S)\subseteq\mathcal N(T^-)
$
then $\mathcal R(S)=\mathcal R(T)$.
\end{teo}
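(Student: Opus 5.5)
The plan is to translate both inequalities into inequalities between positive operators and then read off range inclusions with Douglas' theorem and Theorem~\ref{Crimmins}.

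\emph{Step 1: the inequality $S\leq T$.} Writing $S=S^+-S^-$ and $T=T^+-T^-$, the inequality $S\leq T$ is equivalent to
\[
S^++T^-\leq T^++S^-,
\]
an inequality between positive operators. Douglas' theorem, applied to the square roots, gives
\[
\mathcal R\bigl((S^++T^-)^{1/2}\bigr)\subseteq \mathcal R\bigl((T^++S^-)^{1/2}\bigr).
\]
By Theorem~\ref{Crimmins} with the factors $(S^+)^{1/2}$ and $(T^-)^{1/2}$, we have $\mathcal R((S^++T^-)^{1/2})=\mathcal R((S^+)^{1/2})+\mathcal R((T^-)^{1/2})$. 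Since $S^\pm,T^\pm$ have closed ranges, $\mathcal R((S^+)^{1/2})=\mathcal R(S^+)$, so this equals $\mathcal R(S^+)+\mathcal R(T^-)$. The same holds for $T^++S^-$. This shows that the two formulations in the statement are equivalent, and Step 1 yields $\mathcal R(S^+)+\mathcal R(T^-)\subseteq\mathcal R(T^+)+\mathcal R(S^-)$.

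\emph{Step 2: the inequality $T^\dagger\leq S^\dagger$.} As in the proof of Corollary~\ref{antitonicityhermitianospod}, Lemma~\ref{dpo de MP} gives $T^\dagger=(T^+)^\dagger-(T^-)^\dagger$ and $S^\dagger=(S^+)^\dagger-(S^-)^\dagger$. Hence $T^\dagger\leq S^\dagger$ becomes
\[
(T^+)^\dagger+(S^-)^\dagger\leq (S^+)^\dagger+(T^-)^\dagger .
\]
Applying Douglas and Crimmins again, together with $\mathcal R(((T^+)^\dagger)^{1/2})=\mathcal R(T^+)$ and its analogues, we get $\mathcal R(T^+)+\mathcal R(S^-)\subseteq \mathcal R(S^+)+\mathcal R(T^-)$. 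Combined with Step 1, the two sums are equal.

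\emph{Step 3: identifying the common value.} Since $\mathcal R(S)=\mathcal R(S^+)\oplus\mathcal R(S^-)$ and $\mathcal R(T)=\mathcal R(T^+)\oplus\mathcal R(T^-)$,
\[
\mathcal R(S)+\mathcal R(T)=\bigl(\mathcal R(S^+)+\mathcal R(T^-)\bigr)+\bigl(\mathcal R(T^+)+\mathcal R(S^-)\bigr).
\]
Both brackets are the same subspace, so each equals $\mathcal R(S)+\mathcal R(T)$.

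\emph{Step 4: the additional hypotheses.} All the operators involved are selfadjoint with closed range, so $\mathcal R(X)=\mathcal N(X)^\perp$ for each of them. Then $\mathcal N(T)\subseteq\mathcal N(S^-)$ gives $\mathcal R(S^-)\subseteq\mathcal R(T)$. Since $\mathcal R(T^+)\subseteq\mathcal R(T)$ as well, we get
\[
\mathcal R(S)+\mathcal R(T)=\mathcal R(T^+)+\mathcal R(S^-)\subseteq\mathcal R(T),
\]
hence $\mathcal R(S)\subseteq\mathcal R(T)$. Symmetrically, $\mathcal N(S)\subseteq\mathcal N(T^-)$ gives $\mathcal R(T^-)\subseteq\mathcal R(S)$, so $\mathcal R(S^+)+\mathcal R(T^-)\subseteq\mathcal R(S)$ and $\mathcal R(T)\subseteq\mathcal R(S)$. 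Therefore $\mathcal R(S)=\mathcal R(T)$.

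The only delicate point is Step 2: one must make sure the Moore--Penrose inverses of the positive and negative parts split additively and that the ranges of $((T^\pm)^\dagger)^{1/2}$ coincide with $\mathcal R(T^\pm)$. Both facts are already available in the Preliminaries and in Lemma~\ref{dpo de MP}, so the argument is mostly bookkeeping once the inequalities are rearranged.
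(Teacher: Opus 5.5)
Your proposal is correct and follows the paper's proof essentially step for step. You rewrite the two hypotheses as $S^++T^-\leq T^++S^-$ and $(T^+)^\dagger+(S^-)^\dagger\leq (S^+)^\dagger+(T^-)^\dagger$, compute the square-root ranges via Theorem~\ref{Crimmins} and the closed-range identities, chain the Douglas inclusions into equalities, and settle the last part by taking orthogonal complements of the null-space inclusions; your Step 3 just spells out more explicitly the paper's remark that the common subspace contains all four ranges and hence equals $\mathcal R(S)+\mathcal R(T)$.
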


\begin{proof}
Let 
$A=S^++T^-$, $B=T^++S^-$, $
C=(T^+)^\dagger+(S^-)^\dagger$ and 
$D=(S^+)^\dagger+(T^-)^\dagger$. The two order assumptions are equivalent to $A\leq B$ and $C\leq D$, respectively. Since $S^\pm$ and $T^\pm$ have closed ranges, Theorem~\ref{Crimmins} and the closed-range identities of Section~2 give
$\mathcal R(B^{1/2})
=\mathcal R(T^+)+\mathcal R(S^-)=\mathcal R(C^{1/2})$ and 
$\mathcal R(A^{1/2})
=\mathcal R(S^+)+\mathcal R(T^-)=\mathcal R(D^{1/2}).
$
Douglas' theorem therefore yields
$
\mathcal R(A^{1/2})\subseteq\mathcal R(B^{1/2})
=\mathcal R(C^{1/2})\subseteq\mathcal R(D^{1/2})
=\mathcal R(A^{1/2}).
$
Thus all these ranges coincide and they are equal to both
$\mathcal R(T^+)+\mathcal R(S^-)$ and
$\mathcal R(S^+)+\mathcal R(T^-)$, so it contains the ranges of all four positive and negative parts. Consequently it is equal to $\mathcal R(S)+\mathcal R(T)$. This proves the first two identities. 
For the last assertion, taking orthogonal complements in the additional nullspace inclusions and using closedness of the ranges gives
$
\mathcal R(S^-)\subseteq\mathcal R(T) $ and $
\mathcal R(T^-)\subseteq\mathcal R(S).
$
Hence the range identity just established implies
$
\mathcal R(S)+\mathcal R(T)
=\mathcal R(T^+)+\mathcal R(S^-)\subseteq\mathcal R(T)$, $
\mathcal R(S)+\mathcal R(T)
=\mathcal R(S^+)+\mathcal R(T^-)\subseteq\mathcal R(S).
$ 
It follows that $\mathcal R(S)=\mathcal R(T)$.
\end{proof}

For an example satisfying the hypotheses of the preceding theorem, consider $\mathcal H=\ell^2\oplus\ell^2\oplus\ell^2$ and the operators $S,T\in\mathcal L_{cr}^s(\mathcal H)$ defined by
$
S = 2I \oplus (-2I)\oplus 0$ and $ T =  4I \oplus (-I) \oplus 0$, 
where $I$ denotes the identity operator on $\ell^2$.
Indeed,
$
T-S=2I\oplus I\oplus0\geq0,$ $
S^\dagger-T^\dagger=\tfrac14 I\oplus\tfrac12 I\oplus0\geq0.
$
The nullspace inclusions also hold.

As a consequence of Theorem \ref{antitonicity-igualdad de rangos}, we prove that closed range  positive semidefinite operators with the same range satisfy a weak version of the antitonicity property. The next result can be compared to \cite[Proposition 5.6]{FG-MoorePenrose y orden} which asserts that given  $S,T\in\mathcal{L}^+$ with closed ranges the reverse order law for $ST$ and $TS$ holds if $S\leq T$ and $T^\dagger \leq S^\dagger$.

\begin{cor}\label{weak antitonicity}
Consider $S,T \in \mathcal{L}_{cr}^+$. Then, there exists  $\lambda >0$ such that $S\leq \lambda T$ and $\frac{1}{\lambda}T^\dagger \leq S^\dagger$ if and only if $\mathcal{R}(S)=\mathcal{R}(T)$.
If any of the above conditions holds then $(ST)^\dagger=T^\dagger S^\dagger$ and $(TS)^\dagger=S^\dagger T^\dagger$.
\end{cor}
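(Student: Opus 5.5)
For the forward implication I will apply Theorem~\ref{antitonicity-igualdad de rangos} to the pair $S$ and $\lambda T$. Both operators lie in $\mathcal L^+_{cr}$, so their positive orthogonal decompositions are $S^+=S$, $S^-=0$, $(\lambda T)^+=\lambda T$ and $(\lambda T)^-=0$. The hypotheses are $S\leq\lambda T$ and $(\lambda T)^\dagger=\frac1\lambda T^\dagger\leq S^\dagger$. The theorem then gives $\mathcal R(\lambda T)+\mathcal R(0)=\mathcal R(S)+\mathcal R(0)$, that is, $\mathcal R(T)=\mathcal R(S)$. The extra nullspace hypotheses of the theorem hold trivially here, since $S^-=T^-=0$. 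Alternatively, the argument in the first step of Lemma~\ref{antitonicitypositivos} gives the same conclusion directly.

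For the converse, assume $\mathcal R(S)=\mathcal R(T)$ and call this subspace $\mathcal M$. By Douglas' theorem applied to $S^{1/2}$ and $T^{1/2}$, whose ranges equal $\mathcal M$, there is $\mu>0$ with $S\leq\mu T$. Likewise $(S^\dagger)^{1/2}$ and $(T^\dagger)^{1/2}$ both have range $\mathcal M$, so there is $\nu>0$ with $T^\dagger\leq\nu S^\dagger$. I then take $\lambda=\max\{\mu,1/\nu\}$. This gives $S\leq\mu T\leq\lambda T$ and $\frac1\lambda T^\dagger\leq\nu\cdot\frac{1}{\lambda\nu}T^\dagger\leq\nu S^\dagger\cdot\frac{1}{\lambda\nu}$. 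Since $\frac1{\lambda\nu}\le 1$ and $S^\dagger\ge0$, we get $\frac1\lambda T^\dagger\leq S^\dagger$. The only point to check is this choice of a single $\lambda$ serving both inequalities, and it works because the two inequalities scale in compatible directions.

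For the reverse order law I will use Proposition~\ref{ROL-Gre} with $\mathcal R(S)=\mathcal R(T)=\mathcal M$. First I check that $ST$ has closed range. We have $\mathcal R(ST)=S(\mathcal R(T))=S(\mathcal M)$. Since $\mathcal M=\mathcal N(S)^\perp$ is closed, $S|_{\mathcal M}:\mathcal M\to\mathcal M$ is bijective, so $\mathcal R(ST)=\mathcal M$ is closed. Next, $\mathcal R(S^*ST)=\mathcal R(S^2T)\subseteq\mathcal M=\mathcal R(T)$, and $\mathcal R(TT^*S^*)=\mathcal R(T^2S)\subseteq\mathcal M=\mathcal R(S^*)$. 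Proposition~\ref{ROL-Gre} then gives $(ST)^\dagger=T^\dagger S^\dagger$. Exchanging the roles of $S$ and $T$ gives $(TS)^\dagger=S^\dagger T^\dagger$.

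I expect no real obstacle here. The only step that needs care is the closedness of $\mathcal R(ST)$, which the invertibility of $S|_{\mathcal M}$ handles.
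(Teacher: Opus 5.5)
The forward direction is exactly the paper's argument: Theorem~\ref{antitonicity-igualdad de rangos} applied to $S$ and $\lambda T$. Your reverse order law step is also the paper's appeal to Proposition~\ref{ROL-Gre}, and it is better documented, since you verify that $\mathcal R(ST)=S(\mathcal M)=\mathcal M$ is closed and check both range inclusions.

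\textbf{The gap is in the choice of $\lambda$ in the converse.} From $T^\dagger\le\nu S^\dagger$ you only get $\frac1\lambda T^\dagger\le\frac{\nu}{\lambda}S^\dagger$. So you need $\frac{\nu}{\lambda}\le 1$, i.e.\ $\lambda\ge\nu$. Your condition $\lambda\ge 1/\nu$ controls $\frac{1}{\lambda\nu}$, which is the wrong quantity. The middle inequality of your chain, $\nu\cdot\frac{1}{\lambda\nu}T^\dagger\le\nu S^\dagger\cdot\frac{1}{\lambda\nu}$, is simply $T^\dagger\le S^\dagger$, which was never established. Douglas' theorem need not give optimal constants, so you could obtain, say, $\mu=2$ and $\nu=100$. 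Then your $\lambda=2$, and your argument yields only $\frac12 T^\dagger\le 50\,S^\dagger$.

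\textbf{The repair is immediate.} Take $\lambda=\max\{\mu,\nu\}$, so that $S\le\mu T\le\lambda T$ and $T^\dagger\le\nu S^\dagger\le\lambda S^\dagger$. Both inequalities get easier as $\lambda$ grows, which is the ``compatible scaling'' you had in mind.

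The paper avoids the second Douglas constant altogether. It takes any $\lambda$ with $S\le\lambda T$. Since $\mathcal R(S)=\mathcal R(\lambda T)=\mathcal M$ and both nullspaces equal $\mathcal M^\perp$, condition~3 of Lemma~\ref{antitonicitypositivos} holds. Conditions~1 and~3 then give $(\lambda T)^\dagger\le S^\dagger$.

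That route shows every $\lambda$ with $S\le\lambda T$ works. In particular your $\lambda$ is in fact admissible, but not for the reason you gave. The corrected $\max\{\mu,\nu\}$ argument is more elementary, since it needs only Douglas' theorem.
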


\begin{proof}
Suppose that $S\leq\lambda T$ and $(\lambda T)^\dagger=\lambda^{-1}T^\dagger\leq S^\dagger$ for some $\lambda>0$. Since $S$ and $\lambda T$ are positive, their negative parts vanish, and the additional kernel conditions of Theorem~\ref{antitonicity-igualdad de rangos} hold automatically. Therefore
$\mathcal R(S)=\mathcal R(\lambda T)=\mathcal R(T)$.
Conversely, if $\mathcal R(S)=\mathcal R(T)$, their square roots have the same range. By Douglas' theorem there exists $\lambda>0$ such that $S\leq\lambda T$. Lemma~\ref{antitonicitypositivos}, applied to $S$ and $\lambda T$, gives
$\lambda^{-1}T^\dagger\leq S^\dagger$. Finally, the last part of the assertion follows from Proposition \ref{ROL-Gre}.
\end{proof}

We finish the article by relating the antitonicity property with the Thompson component of selfadjoint operators. 
The \textit{Thompson component} of $S\in \mathcal L^s$, 
$C_S$, is the equivalence class of $S$ under the relation studied in \cite{FM-Thompson components}. If $S=S^+-S^-$ is the positive orthogonal decomposition of $S$, then the Thompson component of $S$ can be written as:
$$
C_S=\{T\in \mathcal L^s: \mathcal R((T^+)^{1/2})=\mathcal R((S^+)^{1/2}), \mathcal R((T^-)^{1/2})=\mathcal R((S^-)^{1/2})  \}.
$$
The reader is referred to \cite{FM-Thompson components} for details.
For $S,T\in\mathcal L_{cr}^s$, this description reduces to
$
T\in C_S$ if and only if  $
\mathcal R(T^+)=\mathcal R(S^+)$ and $
\mathcal R(T^-)=\mathcal R(S^-).$ Then these conditions imply  the orthogonality conditions of Corollary~\ref{antitonicityhermitianospod} and the nullspace conditions of Theorem~\ref{antitonicity-igualdad de rangos}. 

The following result can be found  in \cite[Corollary 4.3]{FM-Thompson components}.
First, recall that the polar decomposition of $T\in\mathcal{L}(\mathcal{H})$ is the factorization $T=V_T|T|$, where $V_T$ is the unique partial isometry with $\mathcal{R}(V_T)=\overline{\mathcal{R}(T)}$ which satisfies the above factorization and also verifies $\mathcal{N}(V_T)=\mathcal{N}(T)$. In particular, for $T\in \mathcal L^s$, the operator 
$U_T$ will denote the reflection defined by $U_T=V_T+P_{\mathcal N(T)}$; that is, $U_T$ satisfies $U_T=U_T^{-1}=U_T^*$.

\begin{pro}\label{Thompson component FM}
    Let $S\in\mathcal{L}^s$, then $C_S=\{T\in \mathcal L^s: \mathcal R(|T|^{1/2})=\mathcal R(|S|^{1/2}), U_T=U_S\}$.
\end{pro}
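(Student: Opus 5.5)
We have to show that $C_S=\{T\in\mathcal L^s:\ \mathcal R(|T|^{1/2})=\mathcal R(|S|^{1/2}),\ U_T=U_S\}$, starting from the description
$$
C_S=\{T\in\mathcal L^s:\ \mathcal R((T^\pm)^{1/2})=\mathcal R((S^\pm)^{1/2})\}.
$$
The idea is to translate the reflection $U_T$ into spectral projections. Since $T=V_T|T|$ with $V_T$ selfadjoint, $V_T=P_{T^+}-P_{T^-}$. Indeed, $T^+$ and $T^-$ are the restrictions of $|T|$ to the closures of their ranges, and $|T|$ commutes with these projections. Hence
$$
U_T=P_{T^+}-P_{T^-}+P_{\mathcal N(T)}, \qquad P_{T^+}+P_{T^-}+P_{\mathcal N(T)}=I.
$$
Equivalently, the $+1$ eigenspace of $U_T$ is $\overline{\mathcal R(T^+)}\oplus\mathcal N(T)$ and the $-1$ eigenspace is $\overline{\mathcal R(T^-)}$. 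So $U_T=U_S$ if and only if $\overline{\mathcal R(T^-)}=\overline{\mathcal R(S^-)}$ and $\overline{\mathcal R(T^+)}\oplus\mathcal N(T)=\overline{\mathcal R(S^+)}\oplus\mathcal N(S)$.

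For the inclusion $\subseteq$, let $T\in C_S$. Taking closures in $\mathcal R((T^\pm)^{1/2})=\mathcal R((S^\pm)^{1/2})$ gives $\overline{\mathcal R(T^\pm)}=\overline{\mathcal R(S^\pm)}$. Hence $\mathcal N(T)=(\overline{\mathcal R(T^+)}\oplus\overline{\mathcal R(T^-)})^\perp=\mathcal N(S)$, and so $U_T=U_S$. Next, $|T|=T^++T^-$ with $T^+T^-=0$, so $|T|^{1/2}=(T^+)^{1/2}+(T^-)^{1/2}$ and the two summands have orthogonal ranges. Therefore
$$
\mathcal R(|T|^{1/2})=\mathcal R((T^+)^{1/2})\oplus\mathcal R((T^-)^{1/2}).
$$
This is the orthogonal-ranges case of Theorem~\ref{Crimmins}, and directly: for $x\in\overline{\mathcal R(T^+)}$ and $y\in\overline{\mathcal R(T^-)}$ we have $(T^+)^{1/2}x+(T^-)^{1/2}y=|T|^{1/2}(x+y)$. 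The same identity holds for $S$, which gives $\mathcal R(|T|^{1/2})=\mathcal R(|S|^{1/2})$.

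For the inclusion $\supseteq$, suppose $\mathcal R(|T|^{1/2})=\mathcal R(|S|^{1/2})$ and $U_T=U_S$. Equal ranges give equal closures, so $\mathcal N(T)=\mathcal N(S)$. Combined with the eigenspace description of $U_T=U_S$, this yields $P_{T^\pm}=P_{S^\pm}$; call these projections $P_\pm$. Now $(T^\pm)^{1/2}=P_\pm|T|^{1/2}$, and the analogous identity holds for $S$. Applying $P_\pm$ to the equal ranges gives
$$
\mathcal R((T^\pm)^{1/2})=P_\pm\mathcal R(|T|^{1/2})=P_\pm\mathcal R(|S|^{1/2})=\mathcal R((S^\pm)^{1/2}).
$$
Hence $T\in C_S$.

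The step that needs the most care is identifying $V_T$ and $U_T$ in terms of the spectral projections. I will verify it through the uniqueness of the polar decomposition. The operator $V:=P_{T^+}-P_{T^-}$ is a partial isometry with $\mathcal N(V)=\mathcal N(T)$ and $\mathcal R(V)=\overline{\mathcal R(T)}$, and it satisfies $V|T|=T^+-T^-=T$; uniqueness then gives $V=V_T$. Everything else reduces to elementary range manipulations.
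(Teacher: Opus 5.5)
Your proof is correct, and it takes a different route from the paper simply because the paper gives no proof: Proposition~\ref{Thompson component FM} is quoted from \cite[Corollary 4.3]{FM-Thompson components}. The description of $C_S$ through $\mathcal R((T^\pm)^{1/2})$ that you start from is also quoted from that paper.

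What you supply is a direct, elementary bridge between the two descriptions. Uniqueness of the polar decomposition identifies $U_T=P_{T^+}-P_{T^-}+P_{\mathcal N(T)}$. The identity $\mathcal R(|T|^{1/2})=\mathcal R((T^+)^{1/2})\oplus\mathcal R((T^-)^{1/2})$ is exactly Theorem~\ref{Crimmins} applied to $(T^+)^{1/2}$ and $(T^-)^{1/2}$, since $\sqrt{T^++T^-}=|T|^{1/2}$. Together these let you move between the two descriptions through the projections $P_{T^\pm}$.

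All the steps check out:
\begin{itemize}
\item the identity $(T^\pm)^{1/2}=P_{T^\pm}|T|^{1/2}$;
\item equality of the ranges of $|T|^{1/2}$ and $|S|^{1/2}$ forces $\mathcal N(T)=\mathcal N(S)$;
\item that kernel equality is precisely what you need to upgrade $U_T=U_S$ to $P_{T^\pm}=P_{S^\pm}$ in the reverse inclusion.
\end{itemize}

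One small simplification: the reflection condition is really just $P_{T^-}=P_{S^-}$, because a reflection is determined by its $-1$ eigenspace. So the second equality in your characterization of $U_T=U_S$ is redundant, though harmless.

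The citation keeps the paper short. Your route makes transparent what $U_T=U_S$ encodes, and why the single range condition on $|T|^{1/2}$ splits into the two conditions on $(T^\pm)^{1/2}$. Bear in mind that your argument is self-contained only relative to the $T^\pm$-description of $C_S$. That description is itself a result of \cite{FM-Thompson components}, not the definition of the Thompson relation.
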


Next, we prove that  the Thompson component of a closed range operator in $\mathcal{L}^s$  can be described by selfadjoint closed range operators satisfying a weak version of the antitonicity property.  

\begin{pro}\label{Thompson component}
Let $S\in\mathcal{L}_{cr}^s$, then
\begin{equation}\label{componente autoadjunta}
C_S=\{T\in\mathcal L^s_{cr}:
|S|\leq\lambda_1T^++\lambda_2T^-,
|S|^\dagger\geq\dfrac1{\lambda_1}(T^+)^\dagger+
\dfrac1{\lambda_2}(T^-)^\dagger,
\text{for }\lambda_1,\lambda_2>0,\, U_S=U_T
\}.
\end{equation}
 In particular, if $S\in\mathcal{L}_{cr}^+$ then
 \begin{equation} \label{componente positiva}
C_S=\{T\in\mathcal{L}_{cr}^+ : \ S \leq \lambda T \ \textrm{and} \ \frac{1}{\lambda}T^\dagger \leq S^\dagger \textrm{ for some } \lambda>0\}.
\end{equation}
\end{pro}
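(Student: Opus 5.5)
We prove the two inclusions in \eqref{componente autoadjunta} and then specialize to the positive case.

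\emph{Inclusion ``$\subseteq$''.} Let $T\in C_S$. We first check that $T\in\mathcal L^s_{cr}$. Since $S\in\mathcal L^s_{cr}$, the operators $S^\pm$ have closed ranges, and therefore $\mathcal R((S^\pm)^{1/2})=\mathcal R(S^\pm)$ is closed. By the definition of $C_S$, $\mathcal R((T^\pm)^{1/2})=\mathcal R((S^\pm)^{1/2})$ is closed, so $T^\pm$ has closed range. As $\mathcal R(T)=\mathcal R(T^+)\oplus\mathcal R(T^-)$ is an orthogonal sum of closed subspaces, $T$ has closed range, and $\mathcal R(T^\pm)=\mathcal R(S^\pm)$. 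Proposition~\ref{Thompson component FM} gives $U_T=U_S$.

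Next we apply Corollary~\ref{weak antitonicity} to the pairs $(S^+,T^+)$ and $(S^-,T^-)$, which have equal ranges. It produces $\lambda_1,\lambda_2>0$ with
\begin{equation*}
S^+\leq\lambda_1T^+,\quad \lambda_1^{-1}(T^+)^\dagger\leq (S^+)^\dagger,\qquad S^-\leq\lambda_2T^-,\quad \lambda_2^{-1}(T^-)^\dagger\leq (S^-)^\dagger .
\end{equation*}
Adding the first and third inequalities gives $|S|\le\lambda_1T^++\lambda_2T^-$. By Lemma~\ref{dpo de MP}, $|S|^\dagger=(S^+)^\dagger+(S^-)^\dagger$, because the ranges of $S^+$ and $S^-$ are orthogonal and both operators are selfadjoint. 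Adding the second and fourth inequalities then gives the inequality for $|S|^\dagger$.

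\emph{Inclusion ``$\supseteq$''.} Let $T\in\mathcal L^s_{cr}$ satisfy the conditions with some $\lambda_1,\lambda_2>0$. Set $B=\lambda_1T^++\lambda_2T^-$. Since $\mathcal R(T^+)\perp\mathcal R(T^-)$, $B$ is positive with closed range $\mathcal R(T)$. By Lemma~\ref{dpo de MP}, $B^\dagger=\lambda_1^{-1}(T^+)^\dagger+\lambda_2^{-1}(T^-)^\dagger$. The hypotheses therefore read $|S|\le B$ and $B^\dagger\le |S|^\dagger$, with $|S|,B\in\mathcal L^+_{cr}$. Corollary~\ref{weak antitonicity} with $\lambda=1$ (or directly the first part of the proof of Lemma~\ref{antitonicitypositivos}) yields $\mathcal R(|S|)=\mathcal R(B)$, that is, $\mathcal R(S)=\mathcal R(T)$.

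Since $|S|$ and $|T|$ have closed ranges, $\mathcal R(|T|^{1/2})=\mathcal R(T)=\mathcal R(S)=\mathcal R(|S|^{1/2})$. Together with $U_T=U_S$, Proposition~\ref{Thompson component FM} gives $T\in C_S$. We could instead verify $\mathcal R(T^\pm)=\mathcal R(S^\pm)$ directly: $U_S$ equals $1$ on $\mathcal R(S^+)$, $-1$ on $\mathcal R(S^-)$, and $1$ on $\mathcal N(S)$. Since $U_S=U_T$ and $\mathcal N(S)=\mathcal N(T)$, the $(-1)$-eigenspaces agree, so $\mathcal R(S^-)=\mathcal R(T^-)$, and then $\mathcal R(S^+)=\mathcal R(T^+)$ by taking orthogonal complements inside $\mathcal R(S)=\mathcal R(T)$. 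Either route works. The main point needing care is that equal ranges alone do not suffice and the condition $U_S=U_T$ is essential; the closed-range bookkeeping is routine.

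\emph{The positive case.} Let $S\in\mathcal L^+_{cr}$. Then $S^-=0$ and $U_S=P_S+P_{\mathcal N(S)}=I$. If $T\in C_S$, then $\mathcal R(T^-)=\mathcal R(S^-)=\{0\}$, so $T^-=0$ and $T\ge0$, with $\mathcal R(T)=\mathcal R(S)$; Corollary~\ref{weak antitonicity} then gives the condition in \eqref{componente positiva}. Conversely, if $T\in\mathcal L^+_{cr}$ satisfies $S\le\lambda T$ and $\lambda^{-1}T^\dagger\le S^\dagger$, then Corollary~\ref{weak antitonicity} gives $\mathcal R(T)=\mathcal R(S)$, hence $\mathcal R(T^+)=\mathcal R(S^+)$ and $T^-=S^-=0$, so $T\in C_S$. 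This is also \eqref{componente autoadjunta} with $T^-=0$ and $\lambda_1=\lambda$, where $\lambda_2$ plays no role.
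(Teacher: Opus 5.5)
Your proof is correct and follows essentially the paper's route. In the forward direction you get $\mathcal R(T^\pm)=\mathcal R(S^\pm)$ and $U_T=U_S$, obtain $\lambda_1,\lambda_2$ and the paired inequalities, and add them via Lemma~\ref{dpo de MP}; your appeal to Corollary~\ref{weak antitonicity} just packages the Douglas-plus-Lemma~\ref{antitonicitypositivos} argument the paper uses directly. In the converse you form $B=\lambda_1T^++\lambda_2T^-$, deduce $\mathcal R(S)=\mathcal R(T)$ and conclude with Proposition~\ref{Thompson component FM}; your optional eigenspace check of $U_S=U_T$ and your direct treatment of the positive case are harmless variations.
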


\begin{proof}
Suppose first that $T\in C_S$. The defining equalities for the component give
$
\mathcal R\bigl((T^+)^{1/2}\bigr)
=\mathcal R\bigl((S^+)^{1/2}\bigr)=\mathcal R(S^+)
$ and $
\mathcal R\bigl((T^-)^{1/2}\bigr)
=\mathcal R\bigl((S^-)^{1/2}\bigr)=\mathcal R(S^-).
$
These ranges are closed, then $T^+$ and $T^-$ have closed ranges, and so  $T\in\mathcal L_{cr}^s$. Then it follows that
$\mathcal R(T^+)=\mathcal R(S^+)$ and $\mathcal R(T^-)=\mathcal R(S^-)$. Moreover, $U_T=U_S$ by Proposition~\ref{Thompson component FM}.
Then, by Douglas' theorem there exist positive constants $\lambda_1,\lambda_2$ such that
$S^+\leq\lambda_1T^+$ ,$S^-\leq\lambda_2T^-$. Applying Lemma~\ref{antitonicitypositivos} to each pair gives
$\frac1{\lambda_1}(T^+)^\dagger\leq(S^+)^\dagger$ and $
\frac1{\lambda_2}(T^-)^\dagger\leq(S^-)^\dagger.$
Adding these inequalities, and using the orthogonality of the positive and negative parts, proves both inequalities in \eqref{componente autoadjunta}. Conversely, suppose $T\in\mathcal L_{cr}^s$ satisfies the two inequalities in \eqref{componente autoadjunta} for some $\lambda_1,\lambda_2>0$, and $U_T=U_S$. Let $W=\lambda_1T^++\lambda_2T^-
$. Then $W$ is positive with closed range, $\mathcal R(W)=\mathcal R(T)$, and Lemma~\ref{dpo de MP} gives $
W^\dagger=\frac1{\lambda_1}(T^+)^\dagger+
\frac1{\lambda_2}(T^-)^\dagger
$. 
The assumed inequalities are $|S|\leq W$ and $W^\dagger\leq|S|^\dagger$. Applying Douglas' theorem to each inequality gives, respectively,
$
\mathcal R(S)=\mathcal R(|S|^{1/2})
\subseteq\mathcal R(W^{1/2})=\mathcal R(T),
$
and
$
\mathcal R(T)=\mathcal R\bigl((W^\dagger)^{1/2}\bigr)
\subseteq\mathcal R\bigl((|S|^\dagger)^{1/2}\bigr)=\mathcal R(S).
$
Thus $\mathcal R(S)=\mathcal R(T)$, or equivalently
$\mathcal R(|S|^{1/2})=\mathcal R(|T|^{1/2})$. Together with $U_T=U_S$, Proposition~\ref{Thompson component FM} now yields $T\in C_S$. Finally, if $S\geq0$, the defining description of $C_S$ implies that every $T\in C_S$ is positive. For positive $S,T$, their reflections both equal $I$, and $T^-=0$. Consequently \eqref{componente autoadjunta} reduces to \eqref{componente positiva}.\end{proof}

\section*{Statements and Declarations}
 The authors have no conflicts of interest.


\begin{thebibliography}{10}

\bibitem{arias2008generalized}
M.L.~Arias, G.~Corach, M.C.~Gonzalez; \emph{Generalized inverses and Douglas
  equations}. Proc. Amer. Math. Soc. 136 (9) (2008), 3177--3183.

\bibitem{AG-aditividad} M.L. Arias, G. Corach, M.C. Gonzalez; \emph{Additivity properties of operator ranges}, Linear Algebra Appl. 439 (2013),   3581--3590.


\bibitem{MR1048801}
J.K. Baksalary, K. Nordstr\"{o}m, G.P.H. Styan; \emph{L\"{o}wner-ordering antitonicity of generalized inverses of {H}ermitian matrices}. Linear Algebra  Appl.  127 (1990), 171--182. 


\bibitem{BHdSW-square integrable}
J. Behrndt, S. Hassi, H. De Snoo,  R. Wietsma; \emph{Square-integrable solutions and Weyl functions for singular canonical systems}, Math. Nachr. 284  (11--12) (2011), 1334--1384. 

\bibitem{BHdSW-limit}
J. Behrndt, S. Hassi, H. De Snoo, R. Wietsma; \emph{Limit properties of
monotone matrix functions}, Linear Algebra Appl. 436 (5) (2012), 935--953. 

 \bibitem{BHWDSAntitonicity} J. Behrndt, S. Hassi, H.S.V. de Snoo, H.L. Wietsma;\emph{Antitonicity of the inverse for selfadjoint matrices, operators, and relations}. 
 Proc. Amer. Math. Soc.  142 (8)  (2014), 2783--2796.



 \bibitem{Bouldin1973}
R. Bouldin; \emph{The pseudo-inverse of a product}.
SIAM J. Appl. Math. 24 (4) (1973), 489--495.

\bibitem{MR0203464}
R.G.~Douglas; \emph{On majorization, factorization, and range inclusion of
  operators on {H}ilbert space}. Proc. Amer. Math. Soc. 17 (1966), 413--415.

 \bibitem{FW}P.A. Fillmore,  J.P. Williams; \emph{On operator ranges.} Adv. Math. 7 (3)(1971), 254--281.
 
  \bibitem{FG-splitting}
  G. Fongi, M.C. Gonzalez; \emph{Proper splitting of Hilbert space operators}. J. Math. Anal. Appl. 545 (2025), 129093.

 \bibitem{FG-MoorePenrose y orden}
 G. Fongi, M.C. Gonzalez; \emph{
Moore-Penrose inverse and partial orders on Hilbert space operators}.
Linear Algebra Appl. 674 (2023), 1--20. 


  \bibitem{FM-Thompson components} G. Fongi, A. Maestripieri; \emph{Differential structure of the Thompson components of selfadjoint operators}. Proc. Amer. Math. Soc. 136 (2008),  613--622.
  
\bibitem{FM-desc.positivas}
G. Fongi, A. Maestripieri; \emph{Positive decompositions of selfadjoint operators}.  Integr. Equ. Oper. Theory 67 (2010), 109--121. 



\bibitem{Gre}
T.N.E. Greville; \emph{Note on the generalized inverse of a matrix product}. SIAM Rev. 8 (4) (1966), 518--521.

 \bibitem{HassiNor1993Antitonicity} S. Hassi,  K. Nordstr\"om; \emph{Antitonicity of the inverse and J-contractivity.}  Operator Theory:
Advances and Applications, 61
(1993), 149--161.

\bibitem{Izumino1982}
S. Izumino; \emph{The product of operators with closed range and an extension of the reverse order law}.
Tohoku Math. J. (2) 34 (1) (1982), 43--52.

\bibitem{Liski} E.P. Liski; \emph{On L\"owner ordering antitonicity of matrix inversion.} Acta Mathematicae Applicatae Sinica 12 (4) (1996), 435--442.

\bibitem{MA} G. Milliken, F. Akdeniz;  \emph{A theorem on the difference of the generalized inverses of two nonnegative matrices}. Commun. Statist. Theor. Meth., A6, 6 (1) (1977), 73--79. 

\bibitem{RN} F. Riesz, B.S. Nagy; \emph{Functional Analysis}, Dover Publications, Inc., New York, 1990.

\end{thebibliography}
\end{document}